\renewcommand*{\backref}[1]{}
\renewcommand*{\backrefalt}[4]{%
	\ifcase #1 (Not cited.)%
	\or        (Cited on page~#2.)%
	\else      (Cited on pages~#2.)%
	\fi}
\newcommand{\N}{\mathbb N}
\newcommand{\Z}{\mathbb Z}
\newcommand{\F}{\mathbb F}
\DeclareMathOperator{\GL}{GL}
\DeclareMathOperator{\Tr}{Tr}
\DeclareMathOperator{\wt}{wt}
\DeclareMathOperator{\Aut}{Aut}
\DeclareMathOperator{\ELM}{ELM}
\DeclareMathOperator{\EL}{EL}
\theoremstyle{plain}
\newtheorem{theorem}{Theorem}[section]
\newtheorem{lemma}[theorem]{Lemma}
\newtheorem{proposition}[theorem]{Proposition}
\theoremstyle{definition}
\newtheorem{conjecture}[theorem]{Conjecture}
\newtheorem{definition}[theorem]{Definition}
\numberwithin{theorem}{section}
\numberwithin{equation}{section}
\numberwithin{table}{section}
\numberwithin{figure}{section}
\DeclareMathOperator{\image}{Im}
\begin{document}
\title{On a recent extension of a family of biprojective APN functions}
\author{Lukas K\"olsch}

\author{
	Lukas K\"olsch\\
}

\author{Lukas K\"olsch \vspace{0.4cm} \ \\
University of South Florida \\\tt lukas.koelsch.math@gmail.com
}

\date{\today}
\maketitle
\abstract{
APN functions play a big role as primitives in symmetric cryptography as building blocks that yield optimal resistance to differential attacks.
In this note, we consider a recent extension of a biprojective APN family by G\"olo\u{g}lu defined on $\mathbb{F}_{2^{2m}}$. We show that this generalization yields functions equivalent to G\"olo\u{g}lu's original family if $3\nmid m$. If $3|m$ we show exactly how many inequivalent APN functions this new family contains. We also show that the family has the minimal image set size for an APN function and determine its Walsh spectrum, hereby settling some open problems. In our proofs, we leverage a group theoretic technique recently developed by G\"olo\u{g}lu and the author in conjunction with a group action on the set of projective polynomials.

\noindent\textbf{Keywords:} APN function, biprojective functions, automorphism group, Walsh spectrum.
}

\thispagestyle{empty}
\section{Introduction and preliminaries}
Let $ \F_{2^n}$ be the finite field with $2^n$ elements. Vectorial Boolean functions $F \colon \F_{2^n} \rightarrow \F_{2^n}$ play an important role in the construction of many block ciphers as potential building blocks for S-Boxes of substitution-permutation networks (SPNs). To be secure against differential attacks, the differential uniformity of an S-Box must be low, for a comprehensive overview, we refer the reader to~\cite[Section 3.4]{Carlet2021}:
\begin{definition}
   A function $F \colon \F_{2^n} \rightarrow \F_{2^n}$ has \textit{differential uniformity} $\delta$ if $$\delta=\max_{b \in \F_{2^n},a \in \F_{2^n}^*}|\{x\in\F_{2^n}\colon F(x+a)+F(x)=b\}|.$$
\end{definition}
Clearly, $x \in \F_{2^n}$ satisfies the equation $F(x+a)+F(x)=b$ if and only if $x+a$ does, so $\delta$ must be even. In particular, the lowest, and thus best possible value, is $\delta=2$. Vectorial Boolean functions $F \colon \F_{2^n} \rightarrow \F_{2^n}$ satisfying the optimal bound $\delta=2$ are called \emph{Almost Perfect Nonlinear (APN)}. There are different ways to represent (APN) functions on $\F_{2^n}$. We will mention two that are relevant to this paper: Firstly, every function mapping $\F_{2^n}$ to itself can be written (uniquely) as a univariate polynomial of degree at most $2^n-1$. Secondly, if $n=2m$ is even, one can write a function $F \colon \F_{2^m}\times \F_{2^m} \rightarrow \F_{2^m}\times \F_{2^m}$ in a \emph{bivariate} way, i.e.
\begin{equation}
F(x,y)=(F_1(x,y),F_2(x,y)),
\label{eq:bivar}
\end{equation}

where $F_1,F_2 \colon \F_{2^m}\times \F_{2^m} \rightarrow \F_{2^m}$. 

Over the last two decades, several constructions of APN functions have been found, although constructing APN functions "by hand" (i.e. without computer searches) remains a difficult task, and not many infinite families are known. All of the known  theoretic constructions of APN functions can be written as momomials $x\mapsto x^d$ or as so called quadratic functions. 

\begin{definition}
   Let $F \colon \F_{2^n} \rightarrow \F_{2^n}$ be a function written as a polynomial $F(x)=\sum_{i=0}^m a_ix^i$. The \textit{algebraic degree} $\deg(F)$ of $F$ is defined by $\deg(F)=\max_{i \colon a_i \neq 0} \wt(i)$, where $\wt(t)$ denotes the binary weight of the integer $i$, i.e., the number of ones written in the base $2$ representation of $i$.
\end{definition}

A function is called \emph{quadratic} if its algebraic degree is $2$. Similarly, a function written in the bivariate way is quadratic if the functions $F_1,F_2$ from Eq.~\eqref{eq:bivar} only contain the monomials $x^{2^i}y^{2^j}$, $x^{2^i+1}$, $y^{2^j+1}$ for some $i,j \in \N$. This is the case if and only if all of its discrete derivatives $\Delta_a F:=F(x+a)+F(x)$ for $a \in \F_{2^n}^*$ are $\F_2$-linear, see~\cite{Carlet2021}. This also explains why finding quadratic APN functions is comparatively easier than non-quadratic ones: A quadratic function is APN if and only if all its (linear) discrete derivatives  $\Delta_a F=F(x+a)+F(x)$ have a one dimensional kernel over $\F_2$.

Certain equivalence relations leave the differential uniformity and thus the APN property invariant.

Denote by $\Gamma_F = \{(x,F(x)) \colon x \in \F_{2^n}\}$ 
the \textbf{graph} of $F$.
\begin{definition}

	Two functions $F,G \colon \F_{2^n} \rightarrow \F_{2^n}$ are called
	\begin{enumerate}[label=(\roman*)]
		\item \emph{CCZ-equivalent}, if there exists an $\F_2$-affine permutation  
		\[\mathcal{A} : (x,y) \mapsto 
		\begin{pmatrix}
			M & P \\
			N & L
		\end{pmatrix}
		\begin{pmatrix}
			x \\
			y
		\end{pmatrix}
		+
		\begin{pmatrix}
			u \\
			v
		\end{pmatrix}
		\] 
		of $\F_{2^n} \times \F_{2^n}$ such that 
		$\mathcal{A}(\Gamma_F) = \Gamma_G$;

		\item \emph{extended affine equivalent (EA-equivalent)}, \\
			if $F$ and $G$ are CCZ-equivalent with $P = 0$;
		\item \emph{extended linear equivalent (EL-equivalent)}, \\
			if $F$ and $G$ are EA-equivalent with $(u,v) = (0,0)$;
		\item \emph{affine equivalent}, \\
			if $F$ and $G$ are EA-equivalent with $P = N = 0$;
		\item \emph{linear equivalent}, \\
			if $F$ and $G$ are affine equivalent with $(u,v) = (0,0)$.
	\end{enumerate}
\end{definition}
All these relations preserve the APN property, see~\cite{Carlet1998}. EL-equivalence between $F,G$ can be written equivalently as $N(x)+L(F(x))=G(M(x))$ which is readily checked from the definition. A major result of Yoshiara~\cite[Theorem 1]{YoshiaraQuadratic} states that two quadratic APN functions are CCZ-equivalent if and only if they are EA-equivalent. It is then straightforward that under the additional condition 
$F(0)=G(0)=0$ it even suffices to consider EL-equivalence 
(see for instance~\cite[Proposition 2.2.]{kasperszhou}). We summarize these
observations in the following theorem.

\begin{theorem} \label{thm:yoshiara}
Two quadratic APN functions $F,G \colon \F \rightarrow \F$ with $F(0)=G(0)=0$ are CCZ-equivalent if and only if they are EL-equivalent.
\end{theorem}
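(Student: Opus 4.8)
The plan is to deduce the statement from Yoshiara's theorem (cited above) together with a short argument that, for quadratic functions fixing $0$, the translation part of an EA-equivalence can be discarded. The implication ``EL-equivalent $\Rightarrow$ CCZ-equivalent'' needs nothing, since EL-equivalence is by definition a restriction of EA-equivalence, itself a restriction of CCZ-equivalence. For the converse, suppose $F$ and $G$ are CCZ-equivalent; as both are quadratic and APN, Yoshiara's theorem makes them EA-equivalent, so I can fix $\F_2$-linear maps $M, N, L$ on $\F_{2^n}$ and constants $u, v \in \F_{2^n}$ with
\[ G(M(x) + u) = L(F(x)) + N(x) + v \qquad \text{for all } x \in \F_{2^n}, \]
where $M$ and $L$ are invertible because the block matrix $\left(\begin{smallmatrix} M & 0 \\ N & L\end{smallmatrix}\right)$ defining the equivalence must be. Evaluating at $x = 0$ and using $F(0) = 0$, $L(0) = 0$, $N(0) = 0$ gives $v = G(u)$.

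I would then introduce the auxiliary function $H(x) := G(M(x) + u) + v$. On one hand $H(x) = L(F(x)) + N(x)$, so $H$ is visibly EL-equivalent to $F$. On the other hand, since $G$ has algebraic degree at most $2$ and $G(0) = 0$, the form $B(a,b) := G(a+b) + G(a) + G(b)$ is $\F_2$-bilinear and symmetric; this is the only place $G(0)=0$ is genuinely used, since a nonzero constant term of $G$ would make $B$ biaffine instead of bilinear. Expanding $G(M(x)+u)$ by this identity and substituting $v = G(u)$,
\[ H(x) = G(M(x)) + G(u) + B(M(x), u) + v = G(M(x)) + B(M(x), u), \]
and since $x \mapsto B(M(x), u)$ is $\F_2$-linear this exhibits $H$ as EL-equivalent to $G$. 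Transitivity of EL-equivalence then gives $F$ EL-equivalent to $G$.

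I do not expect a real obstacle: this is the ``straightforward'' observation alluded to in the text, and the whole content beyond Yoshiara is the absorption of the constants $u, v$, whose crux is that $G(M(x)+u) + G(M(x))$ is $\F_2$-linear in $x$ --- true exactly because $G$ is quadratic with $G(0)=0$. It is worth recording, though, that this EA-to-EL reduction really does fail for non-quadratic $F, G$, so the quadraticity hypothesis is not cosmetic.
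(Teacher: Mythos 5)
Your proposal is correct and follows the same route the paper takes: the paper simply invokes Yoshiara's theorem for the reduction from CCZ- to EA-equivalence and then cites \cite[Proposition 2.2]{kasperszhou} for the ``straightforward'' absorption of the translation parts, which is exactly the argument you spell out (using $v=G(u)$ and the bilinearity of $(a,b)\mapsto G(a+b)+G(a)+G(b)$ for quadratic $G$ with $G(0)=0$). Your write-up just makes explicit the details the paper leaves to the reference.
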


This paper deals with a special type of  family of quadratic, bivariate  APN functions introduced by G\"olo\u{g}lu~\cite[Theorem III.2]{golouglu2022biprojective}.

\begin{theorem} \label{thm:orig}
	Let $\gcd(3k,m)=1$ and let $\sigma \colon \F_{2^m} \rightarrow \F_{2^m}$ be the field automorphism $x \mapsto x^{2^k}$. Then the  function $F\colon \F_{2^m}\times \F_{2^m} \mapsto \F_{2^m}\times \F_{2^m}$ defined as
	\[F(x,y)=(x^{\sigma+1}+xy^\sigma+y^{\sigma+1},x^{\sigma^2+1}+x^{\sigma^2}y+y^{\sigma^2+1})\]
	is APN.
\end{theorem}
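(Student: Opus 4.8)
The plan is to run the standard criterion for quadratic APN functions. Every bivariate monomial occurring in $F$ has the shape $x^{2^i}y^{2^j}$, $x^{2^i+1}$ or $y^{2^j+1}$, so $F$ is quadratic, and hence $F$ is APN exactly when, for every $(a,b)\in\F_{2^m}^2\setminus\{(0,0)\}$, the only solutions $(x,y)\in\F_{2^m}^2$ of the $\F_2$-linear system $\Delta_{(a,b)}F(x,y)=\Delta_{(a,b)}F(0,0)$ are $(x,y)=(0,0)$ and $(x,y)=(a,b)$. The first step is to write this system out: collecting the part of $\Delta_{(a,b)}F$ that is linear in $(x,y)$ turns it into
\[
a x^{\sigma}+(a^{\sigma}+b^{\sigma})x+(a+b)y^{\sigma}+b^{\sigma}y=0,\qquad (a+b)x^{\sigma^2}+a^{\sigma^2}x+(a^{\sigma^2}+b^{\sigma^2})y+b\,y^{\sigma^2}=0 .
\]
One checks immediately that $(0,0)$ and $(a,b)$ satisfy both equations, so everything reduces to showing there is no further common solution.

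Next I would shrink the set of directions that must be considered by exploiting the biprojective shape of $F$. The identity $F(\lambda x,\lambda y)=\bigl(\lambda^{\sigma+1}F_1(x,y),\lambda^{\sigma^2+1}F_2(x,y)\bigr)$ for $\lambda\in\F_{2^m}^*$ exhibits $F$ as EL-equivalent to itself through the diagonal linear maps $(x,y)\mapsto(\lambda x,\lambda y)$ on the source and $(u,v)\mapsto(\lambda^{\sigma+1}u,\lambda^{\sigma^2+1}v)$ on the target; pushing this through the derivative shows that the system for the direction $(a,b)$ and for $(\lambda a,\lambda b)$ always have the same number of solutions. Hence it suffices to treat $(a,b)=(0,1)$ and $(a,b)=(1,c)$ with $c\in\F_{2^m}$. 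The easy cases can then be cleared away directly: for $(a,b)=(0,1)$ the system is $x+y^{\sigma}+y=0$ and $x^{\sigma^2}+y^{\sigma^2}+y=0$, and eliminating $x$ gives $y^{\sigma^3}=y$, so $y\in\F_{2^{3k}}\cap\F_{2^m}=\F_2$ (using $\gcd(3k,m)=1$) and $(x,y)\in\{(0,0),(0,1)\}$; the same elimination for $(a,b)=(1,c)$ with $c\in\F_2$ again collapses to $x^{\sigma^3}=x$ and leaves only $(0,0)$ and $(1,c)$. This is exactly where the full hypothesis $\gcd(3k,m)=1$, rather than merely $\gcd(k,m)=1$, is used.

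The heart of the argument is the case $(a,b)=(1,c)$ with $c\notin\F_2$, so that $c\neq0$ and $c\neq1$. From the first equation one solves for $y^{\sigma}$, applies $\sigma$ to obtain $y^{\sigma^2}$, and substitutes into the second; after clearing denominators this produces a relation $A_2x^{\sigma^2}+A_1x^{\sigma}+A_0x+By=0$ with coefficients $A_0,A_1,A_2,B\in\F_{2^m}$ that are explicit polynomials in $c,c^{\sigma},c^{\sigma^2}$. When $B\neq0$ one solves for $y$ and feeds it back into the first equation, arriving at a single $2^k$-linearized equation $C_3x^{\sigma^3}+C_2x^{\sigma^2}+C_1x^{\sigma}+C_0x=0$ in $x$ alone. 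Since $\gcd(k,m)=1$, $\sigma$ generates $\mathrm{Gal}(\F_{2^m}/\F_2)$, so this equation is equivalent to a projective (trinomial-type) equation, and a suitable root-count for such equations — together with the already-known root $x=1$ coming from the solution $(1,c)$ — must be shown to force the solution space to be exactly $\F_2\cdot 1$, whence $x\in\{0,1\}$ and $y$ is determined. The degenerate configurations ($B=0$, or the vanishing of one or more of the $C_i$) have to be handled separately, checking in each that the surviving linear conditions still pin $(x,y)$ down to $\{(0,0),(1,c)\}$.

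I expect the main obstacle to be precisely this last case. Carrying the elimination through cleanly enough that the coefficients $C_i$ stay tractable is already delicate, and then one must prove that the associated cubic never acquires an irreducible factor of degree $>1$ dividing $X^m-1$ — equivalently, that a hypothetical extra solution would be forced to lie in $\F_{2^{3k}}\setminus\F_2\subseteq\F_{2^m}$, which is empty. Recognising the norm-type sub-expression with exponent $\sigma^2+\sigma+1$ that makes $3\nmid m$ bite (via $X^3-1$ and $X^m-1$ sharing only $X-1$ over $\F_2$), and organising the bookkeeping of the several degenerate subcases, is where essentially all of the difficulty sits.
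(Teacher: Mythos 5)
First, a structural remark: the paper you are annotating does not prove Theorem~\ref{thm:orig} at all --- it is quoted verbatim from G\"olo\u{g}lu's paper (Theorem~III.2 of~\cite{golouglu2022biprojective}) and used as a black box, so there is no in-paper proof to compare against. Judged on its own terms, your proposal sets up the right framework: the derivative system you write down is correct (I checked the coefficients), the reduction via the biprojective homogeneity $F(\lambda x,\lambda y)=(\lambda^{\sigma+1}F_1,\lambda^{\sigma^2+1}F_2)$ to the directions $(0,1)$ and $(1,c)$ is valid, and the three easy cases $(0,1)$, $(1,0)$, $(1,1)$ are genuinely disposed of by your elimination down to $y^{\sigma^3}=y$ resp.\ $x^{\sigma^3}=x$ and $\gcd(3k,m)=1$.

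However, there is a genuine gap: the case $(a,b)=(1,c)$ with $c\notin\F_2$ --- which covers all but three of the normalized directions and is the entire substance of the theorem --- is only described as a plan, not proved. After your elimination you arrive at a $2^k$-linearized equation $C_3x^{\sigma^3}+C_2x^{\sigma^2}+C_1x^{\sigma}+C_0x=0$ whose solution set is an $\F_2$-subspace of dimension anywhere from $0$ to $3$, and you state that a ``suitable root-count \dots must be shown to force the solution space to be exactly $\F_2\cdot 1$,'' while also deferring all degenerate subcases ($B=0$, vanishing $C_i$). No argument is offered for why the explicit coefficients $C_i(c,c^\sigma,c^{\sigma^2})$ exclude a second independent root; this is precisely where the hypothesis $3\nmid m$ and the structure of the pair $(F_1,F_2)$ must interact, and in G\"olo\u{g}lu's actual proof this step is carried by relating the root count to the projective polynomial $X^{\sigma+1}+X+1$ having no roots in $\F_{2^m}$ (equivalently $\gcd(3k,m)=1$, cf.\ Lemma~IV.4 of~\cite{golouglu2022biprojective}) together with Bluher-type results on the number of roots of projective polynomials~\cite{bluher}. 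Your own closing paragraph concedes that ``essentially all of the difficulty sits'' there. As it stands the proposal is a correct and sensible outline of the standard attack, but not a proof.
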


This family was extended in~\cite{calderini}:
\begin{theorem}\label{thm:new}
	Let $\gcd(k,m)=1$ and let $\sigma \colon \F_{2^m} \rightarrow \F_{2^m}$ be the field automorphism $x \mapsto x^{2^k}$. Then the  function $F\colon \F_{2^m}\times \F_{2^m} \mapsto \F_{2^m}\times \F_{2^m}$ defined as
	\[F(x,y)=(x^{\sigma+1}+xy^\sigma+\alpha y^{\sigma+1},x^{\sigma^2+1}+\alpha x^{\sigma^2}y+(1+\alpha^\sigma)xy^{\sigma^2}+\alpha y^{\sigma^2+1})\]
	is APN if $f=X^{\sigma+1}+X+\alpha \in \F_{2^m}[X]$ has no roots in $\F_{2^m}$.
\end{theorem}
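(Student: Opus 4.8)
The plan is to prove that $F$ is APN directly through its second‑order derivatives, feeding the hypothesis on $f$ in via a $\PGL_2(\F_{2^m})$‑action on projective polynomials. Since $F$ is quadratic with $F(0,0)=(0,0)$, by the criterion recalled before Theorem~\ref{thm:yoshiara} it is APN if and only if for every $(a,b)\in\F_{2^m}^2\setminus\{(0,0)\}$ the only $(x,y)\in\F_{2^m}^2$ solving
\[ F(x+a,y+b)+F(x,y)+F(a,b)=(0,0) \]
are $(x,y)\in\{(0,0),(a,b)\}$; expanding the two coordinates with $(x+a)^{\sigma^i}=x^{\sigma^i}+a^{\sigma^i}$, this is a pair of $\F_2$‑linear equations $(\mathrm I)$, $(\mathrm{II})$ in $(x,y)$ whose common $\F_{2^m}$‑solution space (always of dimension at least $1$) must have $\F_2$‑dimension exactly $1$. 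Two preliminary remarks: the hypothesis forces $\alpha\neq 0$ (otherwise $0$ is a root of $f$); and since $F_1(x,y)=x^{\sigma+1}+xy^\sigma+\alpha y^{\sigma+1}=y^{\sigma+1}f(x/y)$ for $y\neq 0$ while $F_1(x,0)=x^{\sigma+1}$, the hypothesis is equivalent to: the homogeneous form $F_1$ has no nontrivial zero in $\F_{2^m}^2$.

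Next I would use the self‑equivalence built into biprojectivity. For every $\lambda\in\F_{2^m}^*$ the pair of maps $(x,y)\mapsto(\lambda x,\lambda y)$ on the domain and $(u,v)\mapsto(\lambda^{-(2^k+1)}u,\lambda^{-(2^{2k}+1)}v)$ on the codomain is an EL‑self‑equivalence of $F$, because $F_1,F_2$ are homogeneous of degrees $2^k+1$ and $2^{2k}+1$; as EL‑equivalence preserves the derivative condition above, one may rescale $(a,b)$, so it suffices to treat $(a,b)$ running over representatives of $\mathbb{P}^1(\F_{2^m})$, i.e.\ $(a,b)=(1,0)$ and $(a,b)=(a,1)$ with $a\in\F_{2^m}$.

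Fix such an $(a,b)$. Equation $(\mathrm I)$ (from $F_1$) is $\F_2$‑linear in $x,x^\sigma,y,y^\sigma$ and $(\mathrm{II})$ (from $F_2$) in $x,x^{\sigma^2},y,y^{\sigma^2}$. I would use $(\mathrm I)$ to write $y^\sigma$, hence (after applying $\sigma$) $y^{\sigma^2}$, as an $\F_2$‑linear combination of $x,x^\sigma,x^{\sigma^2},y$, substitute into $(\mathrm{II})$ to get one relation $R(x,x^\sigma,x^{\sigma^2},y)=0$, then use $R$ to eliminate $y$ and return to $(\mathrm I)$. This produces a linearized identity $L(x)=c_0x+c_1x^\sigma+c_2x^{\sigma^2}+c_3x^{\sigma^3}=0$ with $c_i\in\F_{2^m}$ rational in $a,\alpha$, equivalent to the system on the locus where the inversions used are legitimate (the finitely many exceptional $a$ — essentially $a=\alpha$ and those making some such coefficient vanish — are dealt with by the same procedure with a shorter elimination; the representative $(0,1)$ is handled by the mirror argument eliminating $x$ rather than $y$). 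Because $(0,0)$ and $(a,b)$ are always solutions and $\gcd(k,m)=1$ (so $x^\sigma=x$ forces $x\in\F_2$), the explicit coefficients turn out to factor as $L=M\circ\ell$ where $\ell(x)=ax^\sigma+a^\sigma x$ (which has $\ker\ell\cap\F_{2^m}=\F_2 a$ when $a\neq 0$, giving $x^\sigma+x$ for $(a,b)=(1,0)$) and $M(w)=e_0w+e_1w^\sigma+e_2w^{\sigma^2}$. Consequently the solution space is $1$‑dimensional provided $M$ has no nonzero root in $\F_{2^m}$.

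Finally I would turn the condition on $M$ into one about a projective polynomial: writing $M(w)=w\bigl(e_2w^{\sigma^2-1}+e_1w^{\sigma-1}+e_0\bigr)$ and substituting $p=w^{\sigma-1}$ — a bijection of $\F_{2^m}^*$ since $\gcd(k,m)=1$, with $w^{\sigma^2-1}=p^{\sigma+1}$ — one sees that $M$ has a nonzero root if and only if the projective polynomial $e_2p^{\sigma+1}+e_1p+e_0$ has a root in $\F_{2^m}$, equivalently the homogeneous form $P_a(x,y):=e_2x^{\sigma+1}+e_1xy^\sigma+e_0y^{\sigma+1}$ has a nontrivial zero. The crux is then to show that $P_a$ lies in the $\PGL_2(\F_{2^m})$‑orbit of $F_1$: one exhibits an explicit element of $\GL_2(\F_{2^m})$ (with entries rational in $a,\alpha$) carrying $P_a$ to a nonzero multiple of $F_1$, and uses that this action on projective polynomials preserves the number of projective zeros; hence $P_a$ has a nontrivial zero iff $F_1$ does, i.e.\ iff $f$ has a root in $\F_{2^m}$. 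Under the hypothesis this never occurs, so $(\mathrm I)$, $(\mathrm{II})$ have a $1$‑dimensional solution space for every representative $(a,b)$, and $F$ is APN. I expect the main obstacle to be exactly this identification of $P_a$: carrying out the elimination to obtain $c_0,\dots,c_3$ and then $e_0,e_1,e_2$, producing the $\GL_2(\F_{2^m})$‑transformation to $F_1$, and cleanly handling the exceptional values of $a$. As a sanity check, the case $(a,b)=(1,0)$ already works out: there $L(x)=\alpha^{\sigma^2}x^{\sigma^3}+(1+\alpha^{\sigma^2})x^{\sigma^2}+(1+\alpha^\sigma)x^\sigma+\alpha^\sigma x=\alpha^{\sigma^2}w^{\sigma^2}+w^\sigma+\alpha^\sigma w$ with $w=x^\sigma+x$, and after rescaling $p$ by a suitable power of $\alpha$ and using $(y^{\sigma+1}+y+c)^2=(y^2)^{\sigma+1}+y^2+c^2$ repeatedly, one reaches the polynomial $S^{\sigma+1}+S+\alpha^{2^{k+1}}$, which has a root in $\F_{2^m}$ if and only if $f$ does.
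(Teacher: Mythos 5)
First, a remark on the comparison itself: the paper does not prove Theorem~\ref{thm:new} at all --- it is quoted as a known result from~\cite{calderini} --- so there is no internal proof to measure your attempt against. Judged on its own merits, your strategy is the standard one for biprojective families: reduce the APN property of a quadratic map to the size of the kernels of its linearized derivatives, use the homogeneity of $F_1,F_2$ to restrict $(a,b)$ to projective representatives, eliminate down to a single linearized polynomial in one variable, and convert the resulting non-vanishing condition into the solvability of a projective polynomial that is moved onto $f$ by the $\GL(2,K)$-action. Your worked case $(a,b)=(1,0)$ is correct: the system does collapse to $L(x)=\alpha^{\sigma^2}x^{\sigma^3}+(1+\alpha^{\sigma^2})x^{\sigma^2}+(1+\alpha^\sigma)x^\sigma+\alpha^\sigma x$, this does factor through $w=x^\sigma+x$ as $M(w)=\alpha^{\sigma^2}w^{\sigma^2}+w^\sigma+\alpha^\sigma w$, and the substitution $p=w^{\sigma-1}$ (a bijection of $K^*$ since $\gcd(2^k-1,2^m-1)=1$) followed by the rescaling $p\mapsto\alpha^{-\sigma}p$ lands on $q^{\sigma+1}+q+\alpha^{2^{k+1}}$, whose solvability is equivalent to that of $f$ by applying the Frobenius $k+1$ times.

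Nevertheless, as a proof the proposal has a genuine gap: the generic case $(a,b)=(a,1)$, which is where essentially all of the work lies, is left as a programme rather than carried out. You assert, but do not verify, that after elimination the linearized polynomial factors as $M\circ\ell$ with $\ell(x)=ax^\sigma+a^\sigma x$, and you assert, but do not verify, that the resulting projective form $P_a$ lies in the $\GL(2,K)$-orbit of $F_1$; you yourself flag these two identities as ``the main obstacle.'' They are exactly the computations that carry the content of the theorem --- the coefficients $e_0,e_1,e_2$ depend on both $a$ and $\alpha$, and there is no a priori reason the orbit identification must succeed; it has to be exhibited. The treatment of the exceptional directions (values of $a$ for which the elimination divides by a vanishing coefficient, and the representative $(0,1)$) is likewise only gestured at. As it stands, the argument establishes the $2$-to-$1$ property of $\Delta_{(a,b)}F$ only for $(a,b)=(1,0)$ and its scalar multiples; to complete it you would need to perform the elimination with symbolic $a$, produce the explicit factorization and the explicit $\GL(2,K)$-element carrying $P_a$ to a multiple of $F_1$, and check the degenerate cases separately.
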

Theorem~\ref{thm:new} simplifies to Theorem~\ref{thm:orig} for $\alpha=1$. Indeed, it is easy to see that $f=X^{\sigma+1}+X+1$ has no roots in $\F_{2^m}$ if and only if $\gcd(3k,m)=1$ (see e.g.~\cite[Lemma IV.4.]{golouglu2022biprojective}). \\


The obvious questions left open in~\cite{calderini} are:
\begin{enumerate}
	\item If $3\nmid m$, are the functions of the extended family in Theorem~\ref{thm:new} equivalent to the one in Theorem~\ref{thm:orig}? In other words, does the extension yield new inequivalent functions in this case?
	\item How many inequivalent functions does the extended family in Theorem~\ref{thm:new} yield? Note that the corresponding question for the original family was solved in~\cite{inequivalences}.
\end{enumerate}

In Section~\ref{sec:equiv}, we will answer the first question, showing that the new extended family does not yield any new inequivalent functions if $3\nmid m$ (Theorem~\ref{thm:newold}) and in Section~\ref{sec:three} we will answer the second question (Theorem~\ref{thm:count}). In Section~\ref{sec:four}, we tackle some further open questions from~\cite{calderini}: We show that the functions in Theorem~\ref{thm:new} are $3$-to-$1$. This also shows that they have classical Walsh spectrum (see that section for precise definitions). With these results, all major properties of the APN functions in Theorem~\ref{thm:new} are determined.\\

\textbf{NB:} Note that some of the results we prove in this paper were already announced in~\cite{calderini} without proof, citing personal communication with the author. Here, we give the proofs of these results as well as some additional results surrounding them. Maybe most interestingly, the paper illustrates again the powerful approach developed in~\cite{inequivalences} by the author and G\"olo\u{g}lu based on a careful analysis of the automorphism group. 

\section{Determining equivalence} \label{sec:equiv}

We fix for the rest of the paper the following terminology: $K=\F_{2^m}$, $\sigma \colon K\rightarrow K$ is a field automorphism defined by $x\mapsto x^{2^k}$ of order $m$, i.e. $\gcd(k,m)=1$, and $F_{\alpha,\sigma}=(f_{\alpha,\sigma},g_{\alpha,\sigma})$ is the APN function defined on $K^2$ as given by Theorem~\ref{thm:new} with $f_{\alpha,\sigma},g_{\alpha,\sigma} \colon K^2 \rightarrow K$. \\

Crucial objects that play a major role for the APN functions in Theorem~\ref{thm:new} are \emph{biprojective polynomials}, which are polynomials of the form $f(x,y)=a_1x^{\sigma+1}+a_2x^\sigma y+a_3xy^\sigma+a_4y^{\sigma+1} \in K[x,y]$. We refer to~\cite{golouglu2022biprojective,inequivalences} for some information on these polynomials. Not that $f(x,1)$ is a regular \emph{projective polynomial}, which have been then center of a lot of attention for their relations to various combinatorial objects, see~\cite{bluher} for a standard reference. Note that both $f_{\alpha,\sigma},g_{\alpha,\sigma}$ in Theorem~\ref{thm:new} are biprojective polynomials; and the polynomial $X^{\sigma+1}+X+\alpha \in K[X]$ appearing in the condition of Theorem~\ref{thm:new} is a projective polynomial as well.

\subsection{A useful group action on (bi)projective polynomials}
We now define a group action by $G =  K^* \times \GL(2,K)$ on the set of $2\times 2$ matrices $M_{2\times 2}(K)$ as follows: $M=\begin{pmatrix}
	c_1 & c_3 \\
	c_2 & c_4
\end{pmatrix} \in\GL(2,K)$ acts on a matrix $A \in M_{2\times 2}(K)$ for a fixed field automorphism $\sigma$ via \[M\circ A =\begin{pmatrix}
	x & y 
\end{pmatrix} M A (M^{\sigma})^t \begin{pmatrix}
	x^{\sigma} \\
	y^{\sigma}
\end{pmatrix}
\]
where $M^t$ denotes as usual the transpose of $M$ and $M^{\sigma}$ is the matrix where $\sigma$ is applied to every entry. $K^*$ acts on $A\in M_{2\times 2}(K)$ by regular multiplication.

This group action relates to biprojective polynomials in the following way: We can identify $f(x,y)=a_1x^{\sigma+1}+a_2x^\sigma y+a_3xy^\sigma+a_4 y^{\sigma+1}\in K[x,y]$ with $A=\left(\begin{smallmatrix} a_1 & a_3 \\ a_2 & a_4\end{smallmatrix}\right) \in M_{2\times 2}(K)$ and view the group $G$ acting on the set of all biprojective polynomials via this identification. A technical but straightforward calculation then shows (see~\cite{inequivalences}) that using this identification, we get $Mf(x,y) = f(c_1x+c_2y,c_3x+c_4y)$ and $a f(x,y)=a f(x,y)$.  We want to note that this group action is less artificial than it might appear and has also been considered e.g. in~\cite{bbsemifieldchar2, bierbrauer2018family, inequivalences} to deal with combinatorial objects constructed via projective polynomials since it essentially captures linear changes in a (bi)projective polynomial.

The following key lemma shows the intimate connection between the family of APN functions in Theorem~\ref{thm:new} and the group action we just introduced:

\begin{lemma} \label{lem:action}
	Let $\gcd(k,m)=1$. If $(a,M) \in G$ moves $f_{\alpha_1}=x^{\sigma+1}+xy^\sigma+\alpha_1y^{\sigma+1}$ to $f_{\alpha_2}=x^{\sigma+1}+xy^\sigma+\alpha_2y^{\sigma+1}$ then there is a unique $a' \in K^*$ such that $(a',M)$ moves $g_{\alpha_1}=x^{\sigma^2+1}+\alpha_1x^{\sigma^2}y+(1+\alpha_1^{\sigma})xy^{\sigma^2}+\alpha_1y^{\sigma^2+1}$ to $g_{\alpha_2}=x^{\sigma^2+1}+\alpha_2x^{\sigma^2}y+(1+\alpha_2^{\sigma})xy^{\sigma^2}+\alpha_2y^{\sigma^2+1}$.
\end{lemma}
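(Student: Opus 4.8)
The plan is to reduce the statement to a concrete computation about how the group $G$ acts on biprojective polynomials, using the explicit description $Mf(x,y)=f(c_1x+c_2y,c_3x+c_4y)$ together with scalar multiplication. First I would observe that the key structural fact underlying this lemma is that $g_{\alpha}$ is not an independent object: it is determined by $f_\alpha$ in a way that is compatible with linear substitutions. Concretely, $f_\alpha(x,y)=x^{\sigma+1}+xy^\sigma+\alpha y^{\sigma+1}$ factors (over the algebraic closure, or equivalently its ``roots'' correspond to the projective polynomial $X^{\sigma+1}+X+\alpha$), and $g_\alpha$ is essentially obtained from $f_\alpha$ by applying $\sigma$ to the exponent pattern and twisting by the coefficient $\alpha$ in the prescribed way. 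I would make this precise by checking the identity (a routine but central calculation) that $g_\alpha(x,y) = f_\alpha(x,y)^{\sigma} \cdot (\text{something}) + \dots$ — more realistically, I expect the right bookkeeping device is the observation from~\cite{golouglu2022biprojective,inequivalences} that the pair $(f_\alpha,g_\alpha)$ arises from a single ``doubly biprojective'' object and that the second component is forced once the first is fixed.

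The core of the argument would then run as follows. Suppose $(a,M)\in G$ sends $f_{\alpha_1}$ to $f_{\alpha_2}$. Since $M$ acts by the linear substitution $(x,y)\mapsto(c_1x+c_2y,c_3x+c_4y)$ and $a$ by scaling, the hypothesis says $a\cdot f_{\alpha_1}(c_1x+c_2y,c_3x+c_4y) = f_{\alpha_2}(x,y)$ as polynomials. Now apply the same substitution $M$ to $g_{\alpha_1}$: this yields $g_{\alpha_1}(c_1x+c_2y,c_3x+c_4y)$, which is a biprojective polynomial in the $\sigma^2$-pattern. I want to show this equals $(a')^{-1}g_{\alpha_2}(x,y)$ for a unique scalar $a'$. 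The natural way is to show that $g_{\alpha_1}(c_1x+c_2y,c_3x+c_4y)$ is a scalar multiple of $g_{\alpha_2}(x,y)$; uniqueness of the scalar is then immediate since $g_{\alpha_2}\neq 0$ (two nonzero polynomials that are scalar multiples of each other determine the scalar uniquely), and I should also note $a'\in K^*$ because the substitution is invertible (so the image is nonzero). To prove the scalar-multiple claim, I would exploit the determined-ness of $g$ by $f$: applying $M$ commutes with the ``pass from $f$ to $g$'' operation up to scalars, because that operation is built from field automorphisms (which are additive and multiplicative, hence commute with linear substitutions over $\F_2$) and from the coefficient $\alpha$, which is itself read off from $f$ in an $M$-equivariant way. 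Since $M$ carries $f_{\alpha_1}$ to a scalar multiple of $f_{\alpha_2}$, it must carry $g_{\alpha_1}$ to a scalar multiple of $g_{\alpha_2}$; the two scalars need not be equal, which is exactly why a \emph{new} scalar $a'$ appears rather than $a$ itself.

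Concretely, to avoid hand-waving I would carry out the following explicit steps. (1) Record the precise matrix forms: $f_\alpha\leftrightarrow A_\alpha=\left(\begin{smallmatrix}1&1\\0&\alpha\end{smallmatrix}\right)$ for the $\sigma$-action, and $g_\alpha\leftrightarrow B_\alpha=\left(\begin{smallmatrix}1&1+\alpha^\sigma\\\alpha&\alpha\end{smallmatrix}\right)$ for the $\sigma^2$-action. (2) Translate ``$(a,M)$ moves $f_{\alpha_1}$ to $f_{\alpha_2}$'' into the matrix equation $a\, M A_{\alpha_1}(M^\sigma)^t = A_{\alpha_2}$. (3) Translate the desired conclusion into: there is a unique $a'\in K^*$ with $a'\, M B_{\alpha_1}(M^{\sigma^2})^t = B_{\alpha_2}$. (4) Prove (3) from (2) by establishing a functorial relation $B_\alpha = \Phi(A_\alpha)$ for an explicit map $\Phi$ on matrices satisfying $\Phi(M A (M^\sigma)^t) = (\text{scalar})\cdot M \Phi(A)(M^{\sigma^2})^t$ — here one uses that applying $\sigma$ to a matrix identity and comparing yields $M^{\sigma^2}$ in the right place, and that the dependence of $B$ on $A$ through $\alpha$ and $\alpha^\sigma$ is polynomial in the entries of $A$. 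The main obstacle, and the place I expect the real work to be, is step (4): pinning down the map $\Phi$ and verifying its equivariance up to a scalar, since the coefficient $1+\alpha^\sigma$ in $g_\alpha$ makes $B_\alpha$ not simply a ``$\sigma$-twist'' of $A_\alpha$, and one has to check that the substitution $M$ interacts correctly with that mixed term. Once $\Phi$ is identified and its scalar-equivariance checked, the existence of $a'$ is automatic and its uniqueness follows from $B_{\alpha_2}\neq 0$.
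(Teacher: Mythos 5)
Your setup (steps (1)--(3)) matches the paper's proof exactly: translate both hypotheses into the matrix identities $a\,MA_{\alpha_1}(M^\sigma)^t=A_{\alpha_2}$ and $a'\,MB_{\alpha_1}(M^{\sigma^2})^t=B_{\alpha_2}$, and note that taking determinants pins down $a$ and $a'$ uniquely in terms of $M$ (so uniqueness is indeed the easy half). But the entire content of the lemma is your step (4), and you have not carried it out. You propose to produce a map $\Phi$ on matrices with $B_\alpha=\Phi(A_\alpha)$ and the scalar-equivariance $\Phi\bigl(MA(M^\sigma)^t\bigr)=(\text{scalar})\cdot M\Phi(A)(M^{\sigma^2})^t$, but you never exhibit $\Phi$ or verify any such identity, and it is far from clear that such a $\Phi$ exists: the assignment $A_\alpha\mapsto B_\alpha$ is only defined on the one-parameter family $\left(\begin{smallmatrix}1&1\\0&\alpha\end{smallmatrix}\right)$, and the coefficient $1+\alpha^\sigma$ in $B_\alpha$ is not obtained from $A_\alpha$ by any entrywise-polynomial operation that visibly commutes with conjugation by $M$ up to scalars. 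Your heuristic that ``$\alpha$ is read off from $f$ in an $M$-equivariant way'' is precisely the assertion to be proved, not an argument for it.

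The paper's proof shows what the real work looks like: after normalizing $a=a'=\det(M)=1$, it expands each matrix identity into four scalar equations in $c_1,c_2,c_3,c_4$, eliminates $c_1,c_2$ using two of the $f$-equations, and then checks by direct computation that each of the four resulting $g$-equations is an explicit combination of the two reduced $f$-equations and their images under $\sigma$ (e.g.\ the second $g$-equation is the first reduced $f$-equation plus its $\sigma$-image plus the $\sigma$-image of the second, and the fourth is $\alpha_2$ times a $\sigma$-image). This is a concrete, checkable implication between systems of semilinear equations, not a formal consequence of a functorial equivariance. Without performing that verification (or an equivalent computation), your argument establishes only the uniqueness of $a'$, not its existence, so the lemma remains unproved.
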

\begin{proof}
	Let $M=\begin{pmatrix}
	c_1 & c_3 \\
	c_2 & c_4
\end{pmatrix} \in\GL(2,K)$. $(a,M) \in G$ moving $f_{\alpha_1}$ to $f_{\alpha_2}$ then implies
	
	\begin{equation}
	(a,M) f_{\alpha_1} =\begin{pmatrix}
	x & y 
\end{pmatrix} M \begin{pmatrix}
	a & a \\
	0 & a\alpha_1
\end{pmatrix} (M^{\sigma})^t \begin{pmatrix}
	x^{\sigma} \\
	y^{\sigma}
\end{pmatrix} = f_{\alpha_2}=\begin{pmatrix}
	x & y 
\end{pmatrix} \begin{pmatrix}
	1 & 1 \\
	0 & \alpha_2
\end{pmatrix} \begin{pmatrix}
	x^{\sigma} \\
	y^{\sigma}
\end{pmatrix}.
	\label{eq:f}
	\end{equation}
This implies $M \begin{pmatrix}
	a & a \\
	0 & a\alpha_1
\end{pmatrix} (M^{\sigma})^t=\begin{pmatrix}
	1 & 1 \\
	0 & \alpha_2
\end{pmatrix}$.

In particular, by taking determinants, $\det(M)^{\sigma+1}a^2\alpha_1=\alpha_2$ and for each $M$ there is exactly one value for $a$ that is admissible.

Similarly, $(a',M)$ moves $g_{\alpha_1}$ to $g_{\alpha_2}$ in the following way
	\begin{equation}
	(a',M)g_{\alpha_1} =\begin{pmatrix}
	x & y 
\end{pmatrix} M \begin{pmatrix}
	a' & a'(1+\alpha_1^\sigma) \\
	a'\alpha_1 & a'\alpha_1
\end{pmatrix} (M^{\sigma^2})^t \begin{pmatrix}
	x^{\sigma^2} \\
	y^{\sigma^2}
\end{pmatrix} = g_{\alpha_2}=\begin{pmatrix}
	x & y 
\end{pmatrix} \begin{pmatrix}
	1 & 1+\alpha_2^\sigma \\
	\alpha_2 & \alpha_2
\end{pmatrix} \begin{pmatrix}
	x^{\sigma^2} \\
	y^{\sigma^2}
\end{pmatrix}.
	\label{eq:g}
	\end{equation}
Again, taking determinants yields $a'^2\det(M)^{\sigma^2+1}\alpha_1^{\sigma+1}=\alpha_2^{\sigma+1}$ and $M$ determines $a'$ uniquely. Clearly, scaling $M$ by a non-zero constant just means one has to adjust $a,a'$, so we  can assume without loss of generality $a=a'=\det(M)=1$, i.e. just consider $M f_{\alpha_1}=f_{\alpha_2}$ and $M g_{\alpha_1}=g_{\alpha_2}$.

In this case, $((M^{\sigma})^t)^{-1}=\begin{pmatrix}
	c_4^\sigma &c_2^\sigma \\
	c_3^\sigma & c_1^\sigma
\end{pmatrix}$ and we can rewrite the condition  from Eq.~\eqref{eq:f} as
\begin{equation*} 
\begin{pmatrix}
	c_1 & c_3 \\
	c_2 & c_4
\end{pmatrix}\begin{pmatrix}
	1 & 1 \\
	0 & \alpha_1
\end{pmatrix} = \begin{pmatrix}
	1 & 1 \\
	0 & \alpha_2
\end{pmatrix}\begin{pmatrix}
	c_4^\sigma &c_2^\sigma \\
	c_3^\sigma& c_1^\sigma
\end{pmatrix}.
\end{equation*}
This leads to four equations in the entries: 
\begin{align}
	c_1 &= c_3^\sigma+c_4^\sigma\label{eq:c1}  \\
	c_1+\alpha_1c_3 &= c_2^\sigma+c_1^\sigma \\
	c_2  &= \alpha_2 c_3^\sigma \label{eq:c2} \\
	c_2+\alpha_1c_4 &= \alpha_2 c_1^\sigma.\label{eq:c4}
\end{align}
Eliminating $c_1,c_2$ via Eqs.~\eqref{eq:c1},~\eqref{eq:c2} yields
\begin{align}
	c_3^\sigma+c_4^\sigma+\alpha_1c_3 &= (1+\alpha_2)^\sigma c_3^{\sigma^2}+c_4^{\sigma^2} \label{eq:c5}\\
	\alpha_2 c_3^\sigma+\alpha_1c_4 &= \alpha_2 (c_3^{\sigma^2}+c_4^{\sigma^2}).\label{eq:c6}
\end{align}

Let us now consider $M g_{\alpha_1}=g_{\alpha_2}$, see Eq.~\eqref{eq:g}, leading to
\[\begin{pmatrix}
	c_1 & c_3 \\
	c_2 & c_4
\end{pmatrix}\begin{pmatrix}
	1 & 1+\alpha_1^\sigma \\
	\alpha_1 & \alpha_1
\end{pmatrix} = \begin{pmatrix}
	1 & 1+\alpha_2^\sigma \\
	\alpha_2 & \alpha_2
\end{pmatrix}\begin{pmatrix}
	c_4^{\sigma^2} &c_2^{\sigma^2} \\
	c_3^{\sigma^2} & c_1^{\sigma^2}
\end{pmatrix}.\]
These again yields four equations:
\begin{align*}
	c_1 +\alpha_1c_3&= c_4^{\sigma^2}+(1+\alpha_2^\sigma)c_3^{\sigma^2}  \\
	c_1(1+\alpha_1^\sigma)+\alpha_1c_3 &= c_2^{\sigma^2}+(1+\alpha_2^\sigma)c_1^{\sigma^2}\\
	c_2  +\alpha_1c_4&= \alpha_2 (c_3^{\sigma^2}+c_4^{\sigma^2})  \\
	c_2(1+\alpha_1^\sigma)+\alpha_1c_4 &= \alpha_2 (c_1^{\sigma^2}+c_2^{\sigma^2}).
\end{align*}
It remains to show that Eqs.~\eqref{eq:c1} to~\eqref{eq:c4} imply the four equations above. We again eliminate $c_1,c_2$ via Eqs.~\eqref{eq:c1},~\eqref{eq:c2}, resulting in
\begin{align*}
	c_3^\sigma+c_4^\sigma+\alpha_1c_3&= c_4^{\sigma^2}+(1+\alpha_2^\sigma)c_3^{\sigma^2}  \\
	(c_3^\sigma+c_4^\sigma)(1+\alpha_1^\sigma)+\alpha_1c_3 &= \alpha_2^{\sigma^2}c_3^{\sigma^3}+(1+\alpha_2^\sigma)(c_3^{\sigma^3}+c_4^{\sigma^3}) \\
	\alpha_2c_3^\sigma  +\alpha_1c_4&= \alpha_2 (c_3^{\sigma^2}+c_4^{\sigma^2})  \\
	(\alpha_2c_3^\sigma)(1+\alpha_1^\sigma)+\alpha_1c_4 &= \alpha_2 (c_3^{\sigma^3}+c_4^{\sigma^3}+\alpha_2^{\sigma^2}c_3^{\sigma^3}).
\end{align*}
It is easy to verify that the first equation is Eq.~\eqref{eq:c5}; the second equation is Eq.~\eqref{eq:c5} plus Eq.~\eqref{eq:c5} after applying $\sigma$,  plus Eq.~\eqref{eq:c6} after applying $\sigma$; the third equation is Eq.~\eqref{eq:c6} and the fourth equation is $\alpha_2$ times Eq.~\eqref{eq:c5} after applying $\sigma$. This proves the claim. 
\end{proof}

Lemma~\ref{lem:action} shows that if $f_{\alpha_1}$ and $f_{\alpha_2}$ are in the same orbit under the action of $G$ then $g_{\alpha_1}$ and $g_{\alpha_2}$ are also in the same orbit. If this happens, there are $a,a'\in K^*$ and $c_1,c_2,c_3,c_4\in K$ with $c_1c_4+c_2c_3 \neq 0$ such that $F_{\alpha_1}(c_1x+c_2y,c_3x+c_4y)=(af_{\alpha_2}(x,y),a'g_{\alpha_2}(x,y))$. In particular, in this case $F_{\alpha_1}$ and $F_{\alpha_2}$ are linear equivalent. We summarize:

\begin{proposition} \label{prop:moving}
	Let $F_{\alpha_1,\sigma}=(f_{\alpha_1,\sigma}, g_{\alpha_1,\sigma}),F_{\alpha_2,\sigma}=(f_{\alpha_2,\sigma}, g_{\alpha_2,\sigma})$ be two APN functions on $K^2$ defined as in Theorem~\ref{thm:new}. If $f_{\alpha_1,\sigma}$ and $f_{\alpha_2,\sigma}$ are in the same orbit under $G$ then $F_{\alpha_1,\sigma}$ and $F_{\alpha_2,\sigma}$ are linear equivalent.
\end{proposition}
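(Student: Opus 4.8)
The claim is the natural payoff of Lemma~\ref{lem:action}: once the two biprojective components can be moved \emph{simultaneously} by a single matrix $M$, reading off the corresponding linear substitution produces the equivalence. Here is the plan.

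First I would fix an explicit group element realising the orbit hypothesis. Since $f_{\alpha_1,\sigma}$ and $f_{\alpha_2,\sigma}$ lie in the same $G$-orbit, choose $(a,M)\in G$ with $M=\left(\begin{smallmatrix}c_1 & c_3\\ c_2 & c_4\end{smallmatrix}\right)$, $\det M=c_1c_4+c_2c_3\neq 0$, and $(a,M)f_{\alpha_1,\sigma}=f_{\alpha_2,\sigma}$. Lemma~\ref{lem:action} then supplies a (unique) $a'\in K^*$ with $(a',M)g_{\alpha_1,\sigma}=g_{\alpha_2,\sigma}$. Using the dictionary $Mh(x,y)=h(c_1x+c_2y,\,c_3x+c_4y)$ for biprojective $h$ — applied to $f_{\bullet,\sigma}$ with the automorphism $\sigma$ and to $g_{\bullet,\sigma}$ with the automorphism $\sigma^2$, exactly as in Eqs.~\eqref{eq:f} and~\eqref{eq:g} — together with the scalar action of $K^*$, these two statements become the honest polynomial identities
\[
a\,f_{\alpha_1,\sigma}(c_1x+c_2y,\,c_3x+c_4y)=f_{\alpha_2,\sigma}(x,y),\qquad
a'\,g_{\alpha_1,\sigma}(c_1x+c_2y,\,c_3x+c_4y)=g_{\alpha_2,\sigma}(x,y).
\]

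Next I would package these into a single identity of functions on $K^2$. Put $L_1(x,y)=(c_1x+c_2y,\,c_3x+c_4y)$ and $D(u,v)=(au,\,a'v)$; both are $\F_2$-linear bijections of $K^2$ (the first because $\det M\neq0$, the second because $a,a'\in K^*$). The two identities above say precisely $D\bigl(F_{\alpha_1,\sigma}(L_1(x,y))\bigr)=F_{\alpha_2,\sigma}(x,y)$, i.e.\ $F_{\alpha_2,\sigma}=D\circ F_{\alpha_1,\sigma}\circ L_1$. Consequently the $\F_2$-linear permutation $\mathcal A(x,y)=(L_1^{-1}(x),D(y))$ of $K^2\times K^2$ — which is block-diagonal with off-diagonal blocks $P=N=0$ and zero translation part, hence of the form required for linear equivalence — satisfies $\mathcal A(\Gamma_{F_{\alpha_1,\sigma}})=\Gamma_{F_{\alpha_2,\sigma}}$: writing $p=L_1(q)$ one gets $\mathcal A(\Gamma_{F_{\alpha_1,\sigma}})=\{(L_1^{-1}(p),D(F_{\alpha_1,\sigma}(p)))\}=\{(q,F_{\alpha_2,\sigma}(q))\}=\Gamma_{F_{\alpha_2,\sigma}}$. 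Hence $F_{\alpha_1,\sigma}$ and $F_{\alpha_2,\sigma}$ are linear equivalent.

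There is essentially no obstacle: all the genuine work sits in Lemma~\ref{lem:action}, and the remaining steps are bookkeeping — translating the $G$-action notation into polynomial identities (being mindful that the $g$-component lives over $\sigma^2$ rather than $\sigma$) and checking invertibility of $L_1$ and $D$. If desired, one could first normalise $a=a'=\det M=1$ as in the proof of Lemma~\ref{lem:action}, which makes $D$ the identity and shortens the last paragraph, but this is not necessary.
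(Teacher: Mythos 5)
Your argument is correct and is essentially the paper's own: the author likewise takes the $(a,M)$ realising the orbit condition on the $f$-components, invokes Lemma~\ref{lem:action} to get the matching $a'$ for the $g$-components with the same $M$, and reads off the identity $F_{\alpha_2,\sigma}=D\circ F_{\alpha_1,\sigma}\circ L_1$ as a linear equivalence. Your extra bookkeeping with the graph $\Gamma_F$ and the block-diagonal permutation $\mathcal{A}$ only makes explicit what the paper leaves implicit.
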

This means that we can determine the (in)equivalence of these APN functions by working with the group action of $G$ on the set of biprojective polynomials. 
The orbits and stabilizer sizes of this action  relevant to our case were determined in~\cite[Lemma 7]{inequivalences}:
\begin{proposition} \label{prop:lemma7}
		$G$ acts transitively on the set of biprojective polynomials $f(x,y)=a_1x^{\sigma+1}+a_2x^\sigma y+a_3xy^\sigma+a_4y^{\sigma+1}$ with $a_1 \neq 0$ such that $f(x,1)$ has no roots in $K$. In other words, all such polynomials are in the same orbit under $G$. The size of the stabilizer of any polynomial in this set is $3(2^m-1)$. More precisely, for each $c\in K^*$ there are exactly three elements $(a,M)$ in the stabilizer such that $\det(M)=c$. 
\end{proposition}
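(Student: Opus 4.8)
Throughout write $S$ for the set in the statement, $q=2^m$, and identify a biprojective polynomial $f(x,y)=a_1x^{\sigma+1}+a_2x^\sigma y+a_3xy^\sigma+a_4y^{\sigma+1}$ with the matrix $A=\left(\begin{smallmatrix}a_1&a_3\\a_2&a_4\end{smallmatrix}\right)$ as in the excerpt. The plan is to split the proof into a transitivity part and a stabilizer part, both organised around a normal form. First I would note that ``$a_1\neq0$ and $f(x,1)$ has no root in $K$'' says exactly that $f$, viewed as a form of degree $2^k+1$, has no zero in $\mathbb P^1(K)$ (the zero at infinity is controlled by $a_1=f(1,0)$, the affine zeros by $f(x,1)$), and in particular forces $a_4=f(0,1)\neq0$. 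Since $M\in\GL(2,K)$ permutes the zeros of $f$ in $\mathbb P^1(\overline K)$ through the induced element of $\PGL(2,K)$, which is defined over $K$, and the $K^*$-factor does not move zeros, $S$ is $G$-invariant. I would then reduce every $f\in S$ to the normal form $f_\alpha=x^{\sigma+1}+xy^\sigma+\alpha y^{\sigma+1}$: using $a_4\neq0$, a unipotent substitution $y\mapsto sx+y$ kills $a_2$ (take $s^\sigma=a_2/a_4$); scaling makes $a_1=1$; a diagonal substitution together with one further scaling normalises the $xy^\sigma$-coefficient to $1$, provided that coefficient is nonzero. The only exceptional outcome is $x^{\sigma+1}+\beta y^{\sigma+1}$, which can occur only if $3\mid q-1$ and $\beta$ is not a $(\sigma+1)$-th power; there I would first perturb by a unipotent $x\mapsto x+ty$ (which preserves the ``no zero in $\mathbb P^1(K)$'' property, and keeps $a_1,a_4\neq0$ precisely because $\beta$ is not a $(\sigma+1)$-th power) to re-enter the generic stratum and then normalise as before. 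By $G$-invariance of the zero-freeness, the resulting $\alpha$ always satisfies the hypothesis of Theorem~\ref{thm:new}, i.e.\ $X^{\sigma+1}+X+\alpha$ has no root in $K$.

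For transitivity I would avoid solving systems directly and instead count $|S|$. For $a_1=1$ this amounts to counting polynomials $x^{2^k+1}+a_2x^{2^k}+a_3x+a_4$ with no root in $K$; writing $a_4=x^{2^k+1}+a_2x^{2^k}+a_3x$ and using that $x\mapsto x^{2^k}+c\,x$ has kernel of size $2$ for $c\neq0$ (here $\gcd(k,m)=1$ so $\gcd(2^k-1,2^m-1)=1$) one gets, after a short second-moment computation, that the image of $x\mapsto x^{2^k+1}+a_2x^{2^k}+a_3x$ has size $q/2$ for every $(a_2,a_3)$, whence the number of zero-free monic $f$ of this shape is $q^{3}-q^{2}\cdot q/2=q(q^2-1)/3$ (the reader can check $m=2,k=1$: this counts the $20$ monic irreducible cubics over $\mathbb F_4$). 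Hence $|S|=(q-1)\cdot q(q^2-1)/3=|\GL(2,K)|/3$. Combined with the stabilizer computation below, the orbit of any one $f_\alpha$ has size $|G|/|\mathrm{Stab}|=(q-1)|\GL(2,K)|/(3(q-1))=|\GL(2,K)|/3=|S|$, so $S$ is a single orbit. (Alternatively one can prove transitivity by hand: as in the proof of Lemma~\ref{lem:action}, $(a,M)$ moving $f_{\alpha_1}$ to $f_{\alpha_2}$ translates into an $\F_2$-linear system in the entries of $M$, and one shows it has an invertible solution for any two admissible $\alpha_1,\alpha_2$; validity of $\alpha_2$ excludes rank-one solutions, since a rank-one solution would write $f_{\alpha_2}$ as a scalar times a $(\sigma+1)$-th power of a linear form and hence give a zero in $\mathbb P^1(K)$.)

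For the stabilizer of an admissible $f_\alpha$, comparing determinants in $a\,MA_\alpha(M^\sigma)^t=A_\alpha$ with $A_\alpha=\left(\begin{smallmatrix}1&1\\0&\alpha\end{smallmatrix}\right)$ gives $a^2=(\det M)^{-(\sigma+1)}$, so $a$ is determined by $M$. Writing $M=\mu M'$ with $\mu^2=\det M$ and $M'\in\mathrm{SL}(2,K)$, the condition decouples to $M'A_\alpha(M'^\sigma)^t=A_\alpha$, independently of the value $\det M=\mu^2$. Since $c\mapsto c^2$ is a bijection of $K^*$, this instantly yields ``for each $c\in K^*$ exactly as many stabilizer elements with $\det=c$ as there are elements of $\mathrm{SL}(2,K)$ fixing $f_\alpha$'', hence $|\mathrm{Stab}|=(q-1)\cdot|\mathrm{Stab}_{\mathrm{SL}}|$; so it suffices to prove $|\mathrm{Stab}_{\mathrm{SL}}|=3$. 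Expanding $M'A_\alpha(M'^\sigma)^t=A_\alpha$ exactly as in the proof of Lemma~\ref{lem:action} (the specialisation $\alpha_1=\alpha_2=\alpha$ of Eqs.~\eqref{eq:c5} and~\eqref{eq:c6}) turns it into two $\F_2$-linear equations in $(c_3,c_4)\in K^2$ whose solution set $W$ is an $\F_2$-subspace containing $(0,0)$ (giving $M'=0$) and $(0,1)$ (giving $M'=I$); and for admissible $\alpha$ every nonzero element of $W$ gives an invertible $M'$, because $\det M'=0$ would force $M'A_\alpha(M'^\sigma)^t=0$, i.e.\ $f_\alpha$ vanishing on a $K$-line, contradicting admissibility. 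Thus $|\mathrm{Stab}_{\mathrm{SL}}|=|W|-1$.

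The remaining, and in my view the main, obstacle is to pin $\dim_{\F_2}W$ to exactly $2$. For the lower bound $\dim W\geq2$ I would exhibit a ``triality'' automorphism of $f_\alpha$: a third solution of the specialised Eqs.~\eqref{eq:c5} and~\eqref{eq:c6}, i.e.\ an order-$3$ element of $\mathrm{SL}(2,K)$ fixing $f_\alpha$, which should exist for every admissible $\alpha$ and is the structural reason the stabilizer has size $3$ rather than $1$. The upper bound $\dim W\leq2$ is the delicate point: one must show that the two linearised equations have $\F_2$-rank $2m-2$ on $K^2$, using $\gcd(k,m)=1$ (so that $\Fix(\sigma)=\F_2$, and $\Fix(\sigma^2)$ equals $\F_2$ or $\F_4$ according as $m$ is odd or even) to control the kernels of the linearised polynomials that appear when eliminating variables; the dichotomy $3\nmid q-1$ versus $3\mid q-1$ must be tracked but in the end does not change the count. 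Once $\dim W=2$ is established we get $|\mathrm{Stab}_{\mathrm{SL}}|=3$, hence $|\mathrm{Stab}|=3(q-1)$ with exactly three elements of each determinant, and — via the computation of $|S|$ (or the direct argument) — the transitivity of $G$ on $S$ as well.
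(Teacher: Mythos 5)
This proposition is not proved in the paper at all: it is quoted verbatim from \cite[Lemma 7]{inequivalences}, so there is no in-paper argument to match your proposal against. Judged on its own terms, your proposal is a plausible plan but it has genuine gaps at exactly the points that carry the content of the statement. The most serious one is that the stabilizer computation is left open at its crux: you reduce everything to showing $\dim_{\F_2}W=2$, and then explicitly defer both halves --- the lower bound (an order-$3$ element of $\mathrm{SL}(2,K)$ fixing $f_\alpha$ ``should exist'') and the upper bound (the linearized system ``must be shown'' to have $\F_2$-rank $2m-2$). Neither is carried out, and since your transitivity argument is routed through $|\mathrm{orbit}|=|G|/|\mathrm{Stab}|=|S|$, the missing stabilizer bound takes the transitivity claim down with it. These are not routine verifications: the existence of the order-$3$ stabilizer element and the rank bound are precisely where the hypotheses ($\gcd(k,m)=1$ and zero-freeness of $f(x,1)$) have to be used, and the proof of \cite[Lemma 7]{inequivalences} consists essentially of doing this work.

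The counting of $|S|$ also does not hold up as written. The claim that the image of $x\mapsto x^{2^k+1}+a_2x^{2^k}+a_3x$ has size $q/2$ for every $(a_2,a_3)$ is false: for $(a_2,a_3)=(0,0)$ and $m$ odd, $\gcd(2^k+1,2^m-1)=1$, so $x\mapsto x^{2^k+1}$ is a bijection and the image is all of $K$. Moreover $q^3-q^2\cdot(q/2)=q^3/2$, which is not $q(q^2-1)/3$; to reach the (correct) target count the \emph{average} image size over $(a_2,a_3)$ must come out near $2q/3$, not $q/2$, and that average is exactly the nontrivial Bluher-type count you would need to establish. Finally, a smaller slip: in the stabilizer paragraph you assert that $\det M'=0$ with $M'\neq 0$ forces $M'A_\alpha(M'^{\sigma})^t=0$; in fact a rank-one $M'=uv^t$ gives $f_\alpha(v_1,v_2)\,u(u^{\sigma})^t$, a \emph{nonzero} rank-one matrix. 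The contradiction with admissibility still goes through (a rank-one matrix cannot equal $A_\alpha$, which has determinant $\alpha\neq 0$), as you correctly observe in the parenthetical remark on transitivity, but the reason given here is wrong as stated.
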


We can thus combine Propositions~\ref{prop:moving} and~\ref{prop:lemma7} to the following result.
\begin{theorem}\label{thm:newold}
	Let $K=\F_{2^m}$, $\sigma \colon x \mapsto x^{2^k}$ with $\gcd(k,m)=1$. Any two APN functions from Theorem~\ref{thm:new}, $F_{\alpha_1,\sigma},F_{\alpha_2,\sigma}$, are linear equivalent. In particular, if $3\nmid m$, then all APN functions in Theorem~\ref{thm:new} are equivalent to functions in G\"olo\u{g}lu's original family given in Theorem~\ref{thm:orig}.
\end{theorem}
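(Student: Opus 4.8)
The plan is to deduce Theorem~\ref{thm:newold} almost immediately from Propositions~\ref{prop:moving} and~\ref{prop:lemma7}, which reduce everything to a statement about a single orbit of the group action. First I would recall the setup: each APN function $F_{\alpha,\sigma}$ from Theorem~\ref{thm:new} is built from the pair $(f_{\alpha,\sigma}, g_{\alpha,\sigma})$, and the condition of that theorem says precisely that $X^{\sigma+1}+X+\alpha$ has no roots in $K=\F_{2^m}$; equivalently, the biprojective polynomial $f_{\alpha,\sigma}(x,y)=x^{\sigma+1}+xy^\sigma+\alpha y^{\sigma+1}$ has $f_{\alpha,\sigma}(x,1)=x^{\sigma+1}+x+\alpha$ with no roots in $K$, and its leading coefficient $a_1=1\neq 0$. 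So every admissible $\alpha$ gives a biprojective polynomial lying in exactly the set described in Proposition~\ref{prop:lemma7}.

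The main step is then a one-line invocation: by Proposition~\ref{prop:lemma7}, $G$ acts \emph{transitively} on that set, so for any two admissible values $\alpha_1, \alpha_2$, the polynomials $f_{\alpha_1,\sigma}$ and $f_{\alpha_2,\sigma}$ lie in the same $G$-orbit. Proposition~\ref{prop:moving} then yields that $F_{\alpha_1,\sigma}$ and $F_{\alpha_2,\sigma}$ are linear equivalent. This establishes the first sentence of the theorem. For the second sentence, I would note that when $3\nmid m$ the value $\alpha=1$ is itself admissible --- indeed $X^{\sigma+1}+X+1$ has no roots in $K$ exactly when $\gcd(3k,m)=1$, which given $\gcd(k,m)=1$ is equivalent to $3\nmid m$, as recorded right after Theorem~\ref{thm:new} --- and $F_{1,\sigma}$ is exactly a member of G\"olo\u{g}lu's original family from Theorem~\ref{thm:orig} (Theorem~\ref{thm:new} reduces to Theorem~\ref{thm:orig} at $\alpha=1$). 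Since every $F_{\alpha,\sigma}$ is linear equivalent to $F_{1,\sigma}$ by the first part, every function in Theorem~\ref{thm:new} is linear equivalent to a function in Theorem~\ref{thm:orig}, completing the proof.

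There is essentially no obstacle at this level, since the two cited propositions do all the work; the only points requiring a little care are bookkeeping ones. I would double-check that the hypothesis $\gcd(k,m)=1$ (rather than $\gcd(3k,m)=1$) is exactly what is needed: Lemma~\ref{lem:action} and Proposition~\ref{prop:moving} are stated under $\gcd(k,m)=1$, and Proposition~\ref{prop:lemma7} (via \cite[Lemma 7]{inequivalences}) applies to the relevant orbit without the stronger coprimality condition, so no gap arises. I would also make explicit that ``linear equivalent'' here is the equivalence notion from the definition (affine with $N=P=0$ and $(u,v)=(0,0)$, i.e.\ $F_{\alpha_1}\circ (\text{linear in } x,y) = (\text{diagonal linear}) \circ F_{\alpha_2}$), which is exactly what the identity $F_{\alpha_1}(c_1x+c_2y,c_3x+c_4y)=(af_{\alpha_2}(x,y),a'g_{\alpha_2}(x,y))$ from the discussion before Proposition~\ref{prop:moving} encodes. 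Finally, I would remark that this instantly answers the first open question from \cite{calderini} in the negative: for $3\nmid m$ the extension in Theorem~\ref{thm:new} produces no new functions up to equivalence, which is the content worth emphasizing.
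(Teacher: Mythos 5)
Your proposal is correct and follows exactly the paper's own argument: the paper's proof is the one-line deduction from Proposition~\ref{prop:moving} together with the transitivity statement in Proposition~\ref{prop:lemma7}, with the $\alpha=1$ admissibility observation for $3\nmid m$ already recorded after Theorem~\ref{thm:new}. Your additional bookkeeping remarks are accurate but not needed beyond what the paper states.
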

\begin{proof}
	Follows immediately from Proposition~\ref{prop:moving} and the transitivity of the group action as proven in Proposition~\ref{prop:lemma7}.
\end{proof}

\section{Counting the number of inequivalent APN functions in the extended family}\label{sec:three}

In this section, we use group theoretical tools to determine the number of inequivalent function in the extended APN family. The machinery is a variant used by the author and G\"olo\u{g}lu in a series of papers to establish (in)equivalence of combinatoral structures~\cite{inequivalences, golouglu2022exponential, semifield2}.\\

We start by spotting a simple equivalence:

\begin{proposition} \label{prop:inverse}
	Let $\sigma \colon x\mapsto x^{2^k}$ and $\overline{\sigma} \colon x\mapsto x^{2^{m-k}}$ be field automorphisms on $K=\F_{2^m}$ that are inverse to each other. Then $F_{\alpha,\sigma}$ and $F_{\alpha^{\overline{\sigma}},\overline{\sigma}}$ are linear equivalent.
\end{proposition}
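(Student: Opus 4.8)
The plan is to exhibit an explicit linear equivalence assembled from field automorphisms and coordinate rescalings, exploiting that a power of the Frobenius map acts coordinate-wise as an $\F_2$-linear bijection of $K^2$. (Throughout, $\overline\sigma=\sigma^{-1}$ is a field automorphism of $K$ of order $m$, since $\gcd(m-k,m)=\gcd(k,m)=1$, so that $F_{\alpha',\overline\sigma}$ is defined; being linear equivalent to the APN function $F_{\alpha,\sigma}$ will in particular show it is APN as well.)

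The core step is to show that $F_{\alpha,\sigma}$ is linear equivalent to $F_{\alpha,\overline\sigma}$. For this I would pre-compose $F_{\alpha,\sigma}$ with the $\F_{2^m}$-linear bijection $M\colon(x,y)\mapsto(\alpha y,x)$ and post-compose with the $\F_2$-linear bijection $L\colon(u,v)\mapsto\bigl(\alpha^{-\overline\sigma}u^{\overline\sigma},\,\alpha^{-\overline\sigma^2}v^{\overline\sigma^2}\bigr)$, and then simply compute. Using only $\sigma\overline\sigma=\Id$ together with the fact that $z\mapsto z^{2^j}$ is a field homomorphism — so that, for instance, $(x^{\sigma+1})^{\overline\sigma}=x^{\overline\sigma+1}$, $(x^\sigma y)^{\overline\sigma}=xy^{\overline\sigma}$, $(x^{\sigma^2+1})^{\overline\sigma^2}=x^{\overline\sigma^2+1}$ and $(x^{\sigma^2}y)^{\overline\sigma^2}=xy^{\overline\sigma^2}$ — one obtains $\alpha^{-\overline\sigma}\bigl(f_{\alpha,\sigma}(\alpha y,x)\bigr)^{\overline\sigma}=f_{\alpha,\overline\sigma}(x,y)$ for the first component and $\alpha^{-\overline\sigma^2}\bigl(g_{\alpha,\sigma}(\alpha y,x)\bigr)^{\overline\sigma^2}=g_{\alpha,\overline\sigma}(x,y)$ for the second; hence $L\circ F_{\alpha,\sigma}\circ M=F_{\alpha,\overline\sigma}$.

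To pass from $F_{\alpha,\overline\sigma}$ to $F_{\alpha^{\overline\sigma},\overline\sigma}$ I would conjugate by the $\F_2$-linear bijection $\pi\colon(u,v)\mapsto(u^{\overline\sigma},v^{\overline\sigma})$, i.e. apply $\overline\sigma$ coordinate-wise to both input and output: a short computation of the same flavour (now using $(x^\sigma)^{\overline\sigma+1}=x^{\sigma+1}$ first, then re-applying $\overline\sigma$-powers to return to $\overline\sigma$-exponents) yields $\pi\circ F_{\alpha,\overline\sigma}\circ\pi^{-1}=F_{\alpha^{\overline\sigma},\overline\sigma}$. Since $\pi\circ F\circ\pi^{-1}$ is always linear equivalent to $F$, this gives $F_{\alpha,\overline\sigma}$ linear equivalent to $F_{\alpha^{\overline\sigma},\overline\sigma}$; alternatively this last equivalence follows at once from Theorem~\ref{thm:newold} applied with $\overline\sigma$ in place of $\sigma$. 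Composing with the equivalence of the previous paragraph proves the claim.

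I do not expect a genuine obstacle — the result is announced as a simple equivalence — but the step most prone to computational slips is checking that the single substitution $(x,y)\mapsto(\alpha y,x)$ which straightens the degree-$\sigma$ component $f$ into the desired form simultaneously straightens the degree-$\sigma^2$ component $g$; that the $g$-part is forced to cooperate is morally the rigidity behind Lemma~\ref{lem:action}, but here it is cleanest just to verify it directly, being careful to distinguish the function $z\mapsto(p(z))^{2^j}$ from the polynomial obtained by multiplying the exponents of $p$ by $2^j$.
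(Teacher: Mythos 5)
Your proof is correct and follows essentially the same approach as the paper: both exhibit an explicit linear equivalence by applying powers of $\overline{\sigma}$ to the two output components and composing with a suitable $\GL(2,K)$ substitution of the inputs. The only difference is cosmetic --- the paper reaches $F_{\alpha^{\overline{\sigma}},\overline{\sigma}}$ in one step using the shear $(x,y)\mapsto(x+y,y)$, whereas you factor through $F_{\alpha,\overline{\sigma}}$ via the swap $(x,y)\mapsto(\alpha y,x)$ with an $\alpha$-rescaling and a final coordinatewise Frobenius conjugation; both computations check out.
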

\begin{proof}
	Recall $F_{\alpha,\sigma}=(f_{\alpha,\sigma},g_{\alpha,\sigma})$. We apply $\overline{\sigma}$ to $f_{\alpha,\sigma}$ and $\overline{\sigma}^2$ to $g_{\alpha,\sigma}$, we get
	\[F'=(x^{\overline{\sigma}+1}+x^{\overline{\sigma}}y+\alpha^{\overline{\sigma}}y^{\overline{\sigma}+1},x^{\overline{\sigma}^2+1}+\alpha^{\overline{\sigma}^2}xy^{\overline{\sigma}^2}+(1+\alpha^{\overline{\sigma}^2})x^{\overline{\sigma}^2}y+\alpha^{\overline{\sigma}^2}y^{\overline{\sigma}^2+1}).\]
	Now perform a shift $x\mapsto x+y$ and we get 
		\[F''=(x^{\overline{\sigma}+1}+xy^{\overline{\sigma}}+\alpha^{\overline{\sigma}}y^{\overline{\sigma}+1},x^{\overline{\sigma}^2+1}+\alpha^{\overline{\sigma}^2}x^{\overline{\sigma}^2}y+(1+\alpha^{\overline{\sigma}^2})xy^{\overline{\sigma}^2}+\alpha^{\overline{\sigma}}y^{\overline{\sigma}^2+1}).\]
		This is exactly $F_{\alpha^{\overline{\sigma}},\overline{\sigma}}$.
\end{proof}
Note that $p_1=X^{\sigma+1}+X+\alpha$ has no roots in $K$ if and only if $p_2=X^{\overline{\sigma}+1}+X+\alpha^{\sigma}$ has no roots in $K$. Indeed, applying $\overline{\sigma}$ to $p_1$ and then a transformation $X\mapsto X+1$ exactly yields $p_2$.

Theorem 3 in~\cite{inequivalences} gives a very strong and general tool to determine when two biprojective APN functions are equivalent or not. More precisely, it gives very strong conditions on the form of potential equivalences between biprojective functions. The result relies on group theoretic properties based on the very special automorphism groups of such functions. We will not explain the entire theory here, instead just focus on the parts needed to our case. The interested reader is invited to read the extended exposition in~\cite{inequivalences}. Still, we will need to introduce a bit of notation to explain this result.\\

We define the group of EL-mappings (i.e., the set of mappings that correspond to extended linear transformations on graphs) as 
\begin{equation*}
\ELM = \Bigg\{ \begin{pmatrix}
	M & 0 \\
	N & L
\end{pmatrix} \in \GL(K^2)\cong \GL(2m,\F_2)\Bigg\}.
\end{equation*}
where $M,N,L$ are linear mappings on $K$. 

Further denote by 
\[\Aut_{\EL}(F)=\{\mathcal{A} \in \ELM \colon \mathcal{A}(\Gamma_F)=\Gamma_F\}\]
 the group of EL-automorphisms of a function $F$. Clearly, if $F$ and $G$ are EL-equivalent, the corresponding EL-automorphism groups are conjugate in $\ELM$; this is essentially just a simple application of the orbit-stabilizer theorem.
\begin{proposition} \label{prop:conjugated} {\cite[Proposition 1]{inequivalences}}
	Assume $F,G \colon \F \rightarrow \F$ are EL-equivalent via the EL-mapping $\gamma \in \ELM$, i.e., $\gamma(\Gamma_F)=\Gamma_G$. Then $\Aut_{\EL}(F)=\gamma^{-1} \Aut_{\EL}(G) \gamma$.
\end{proposition}

The important fact is that all biprojective functions have a big subgroup in the group of EL-automorphisms that can be written down in a simple and explicit way, this was the main fact leveraged in~\cite{inequivalences}. Since the functions $F_{\alpha,\sigma}$ we investigate are a special case of biprojective function, we thus have the following.

\begin{proposition}{\cite[Lemma 3]{inequivalences}}
	Let $M_a=\left(\begin{smallmatrix} a & 0 \\ 0 & a \end{smallmatrix}\right) \in \GL(2,K)$, $L_a=\left(\begin{smallmatrix} a^{\sigma+1} & 0 \\ 0 & a^{\sigma^2+1} \end{smallmatrix}\right)\in \GL(2,K)$ for  $a \in K^*$. Then 
	\[Z^\sigma=\left\{\begin{pmatrix}
	M_a & 0 \\
	0 & L_a
\end{pmatrix} \colon a \in K^*\right\} \leq \Aut_{\EL}(F_{\alpha,\sigma})\]
for any admissible $\alpha$.
\end{proposition}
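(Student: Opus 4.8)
The plan is to verify directly that each matrix $\left(\begin{smallmatrix} M_a & 0 \\ 0 & L_a\end{smallmatrix}\right)$ stabilizes the graph $\Gamma_{F_{\alpha,\sigma}}$, which by the identity $N(x)+L(F(x))=G(M(x))$ (with $N=0$, $G=F$ here) amounts to checking that $L_a\bigl(F_{\alpha,\sigma}(M_a(x,y))\bigr)=F_{\alpha,\sigma}(x,y)$ for all $(x,y)\in K^2$. Since $M_a$ is scalar multiplication by $a$ on both coordinates, we have $M_a(x,y)=(ax,ay)$, so the task reduces to showing
\[
a^{\sigma+1}f_{\alpha,\sigma}(ax,ay)=f_{\alpha,\sigma}(x,y)\quad\text{and}\quad a^{\sigma^2+1}g_{\alpha,\sigma}(ax,ay)=g_{\alpha,\sigma}(x,y).
\]
Wait --- I should be careful about the direction: $L_a$ has diagonal entries $a^{\sigma+1}$ and $a^{\sigma^2+1}$, so applying $L_a$ to $F_{\alpha,\sigma}(ax,ay)$ multiplies the first component by $a^{\sigma+1}$ and the second by $a^{\sigma^2+1}$; for this to equal $F_{\alpha,\sigma}(x,y)$ we actually want $f_{\alpha,\sigma}(ax,ay)=a^{-(\sigma+1)}f_{\alpha,\sigma}(x,y)$, i.e. $f_{\alpha,\sigma}$ is homogeneous of degree $\sigma+1$ in the sense that every monomial scales by $a^{\sigma+1}$. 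This is manifestly true: each monomial of $f_{\alpha,\sigma}=x^{\sigma+1}+xy^\sigma+\alpha y^{\sigma+1}$ has total "$\sigma$-degree" $\sigma+1$ (meaning $x^iy^j$ with the exponent pattern making it scale by $a^{\sigma+1}$ under $x,y\mapsto ax,ay$), and similarly every monomial of $g_{\alpha,\sigma}=x^{\sigma^2+1}+\alpha x^{\sigma^2}y+(1+\alpha^\sigma)xy^{\sigma^2}+\alpha y^{\sigma^2+1}$ scales by $a^{\sigma^2+1}$.

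So the concrete steps are: first, observe that under $(x,y)\mapsto(ax,ay)$ a monomial $x^{\sigma+1}=x^{2^k+1}$ becomes $a^{2^k+1}x^{2^k+1}=a^{\sigma+1}x^{\sigma+1}$, a monomial $xy^\sigma=xy^{2^k}$ becomes $a^{1+2^k}xy^\sigma=a^{\sigma+1}xy^\sigma$, and $y^{\sigma+1}$ becomes $a^{\sigma+1}y^{\sigma+1}$; hence $f_{\alpha,\sigma}(ax,ay)=a^{\sigma+1}f_{\alpha,\sigma}(x,y)$. Second, do the analogous (equally mechanical) computation for the four monomials of $g_{\alpha,\sigma}$, each of which picks up a factor $a^{\sigma^2+1}$ because the exponent pairs are $(\sigma^2+1,0)$, $(\sigma^2,1)$, $(1,\sigma^2)$, $(0,\sigma^2+1)$, all of "$\sigma$-weight" $\sigma^2+1$. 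Third, combine: $L_a\,F_{\alpha,\sigma}(ax,ay)=(a^{\sigma+1}\cdot a^{\sigma+1\,?}\dots)$ --- more precisely, the first component of $L_a F_{\alpha,\sigma}(ax,ay)$ is $a^{\sigma+1}f_{\alpha,\sigma}(ax,ay)=a^{\sigma+1}\cdot a^{-(\sigma+1)}$? No: we need the graph condition $\mathcal A(\Gamma_F)=\Gamma_F$, i.e. $(M_a(x,y),\,L_a F(x,y))\in\Gamma_F$, which means $L_a F(x,y)=F(M_a(x,y))=F(ax,ay)$; by the homogeneity just established $F(ax,ay)=(a^{\sigma+1}f_{\alpha,\sigma}(x,y),\,a^{\sigma^2+1}g_{\alpha,\sigma}(x,y))=L_a F(x,y)$, which is exactly what we wanted. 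Finally, note $M_a,L_a\in\GL(2,K)$ since $a\ne 0$, so $\left(\begin{smallmatrix}M_a&0\\0&L_a\end{smallmatrix}\right)\in\ELM$, and the map $a\mapsto\left(\begin{smallmatrix}M_a&0\\0&L_a\end{smallmatrix}\right)$ is clearly an injective homomorphism $K^*\to\ELM$, so $Z^\sigma$ is a subgroup of $\Aut_{\EL}(F_{\alpha,\sigma})$ isomorphic to $K^*$.

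There is essentially no obstacle here: the statement is the special case of \cite[Lemma 3]{inequivalences} applied to $F_{\alpha,\sigma}$, and the only thing to check is the scaling (homogeneity) behaviour of the biprojective monomials, which is immediate from inspecting exponents. The one point requiring a little care is bookkeeping the two different "degrees" $\sigma+1$ and $\sigma^2+1$ attached to the two coordinates of $F$, matching them to the two diagonal blocks $a^{\sigma+1}$ and $a^{\sigma^2+1}$ of $L_a$, and getting the direction of the graph condition right; once that is set up the verification is a one-line substitution. Alternatively, one could simply invoke that $f_{\alpha,\sigma}$ and $g_{\alpha,\sigma}$ are biprojective polynomials and that the scalar action $M_a\in\GL(2,K)$ in the group $G$ of Section~\ref{sec:equiv} sends a biprojective polynomial $h$ to $h(ax,ay)=a^{\sigma+1}h(x,y)$ (resp. $a^{\sigma^2+1}h(x,y)$ for the $\sigma^2$-biprojective $g_{\alpha,\sigma}$), so that $L_a$ exactly undoes this scaling on $\Gamma_{F_{\alpha,\sigma}}$.
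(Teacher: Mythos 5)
Your verification is correct: the paper itself gives no proof of this proposition (it is quoted directly from \cite[Lemma 3]{inequivalences}), and the direct check that $F_{\alpha,\sigma}(ax,ay)=\bigl(a^{\sigma+1}f_{\alpha,\sigma}(x,y),\,a^{\sigma^2+1}g_{\alpha,\sigma}(x,y)\bigr)=L_aF_{\alpha,\sigma}(x,y)$ by homogeneity of the biprojective monomials is exactly the intended argument. Your initial statement of the graph condition (with $L_a$ applied to $F(M_a(x,y))$ and equated to $F(x,y)$) was the wrong direction and would in fact force $a^{2(\sigma+1)}=1$; you catch and fix this, but the final write-up should state only the correct condition $L_aF(x,y)=F(M_a(x,y))$ from the outset.
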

Clearly, $Z^\sigma$ is a cyclic group of order $|K^*|=2^m-1$.

Let $p$ be a Zsygmondy prime of $2^m-1$, i.e. a prime number $p$ such that $p|2^m-1$ but $p\nmid 2^r-1$ for any $r<m$. Such a $p$ always exists by a theorem of Zsygmondy if $m>1$, $m\neq 6$, see~\cite[Chapter IX., Theorem 8.3.]{huppert}. Since $Z^\sigma$ is cyclic and $p$ divides $|Z^\sigma|=2^m-1$, we have a unique Sylow $p$-group in $Z^\sigma$, which we will denote by $Z^\sigma_p$. If $R$ is the unique Sylow $p$-subgroup of $K^*$ then clearly 
\[Z^\sigma_p=\left\{\begin{pmatrix}
	M_a & 0 \\
	0 & L_a
\end{pmatrix} \colon a \in R\right\} \leq \Aut_{\EL}(F_{\alpha,\sigma}).\]

We also denote by $C_{\alpha,\sigma}$ the centralizer of $Z^\sigma$ in $\Aut_{\EL}(F_{\alpha,\sigma})$.  

With this notation in place, we may state the equivalence result from~\cite{inequivalences}. 
Applied to the functions $F_{\alpha,\sigma}$ from Theorem~\ref{thm:new}, it states the following:

\begin{theorem} \label{thm:projequiv} {\cite[Theorem 3]{inequivalences}}
Assume $m>2$ and $m \notin \{4,6\}$. Let $F_{\alpha_1,\sigma}=(f_1,g_1)$ and $F_{\alpha_2,\tau}=(f_2,g_2)$ be APN functions from Theorem~\ref{thm:new} defined on $K^2=\F_{2^m}^2$ with field automorphisms $\sigma,\tau$.
Assume further that
	\begin{equation}
		p \text{ does not divide } |C_{\alpha,\sigma}|/(2^m-1).
	\tag{C}\label{eq:condition}
	\end{equation}
	Then $F_{\alpha_1,\sigma},F_{\alpha_2,\tau}$ cannot be EL-equivalent unless $\sigma=\tau$ or $\sigma=\tau^{-1}$. 
\end{theorem}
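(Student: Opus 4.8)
The plan is to specialize the automorphism-group comparison of~\cite{inequivalences} to the functions $F_{\alpha,\sigma}$, isolating the three mechanisms that make it work: conjugacy of EL-automorphism groups, the rigidity of a Singer-type element of order $p$, and a final elementary number-theoretic matching. Suppose $F_{\alpha_1,\sigma}$ and $F_{\alpha_2,\tau}$ are EL-equivalent via $\gamma \in \ELM$. By Proposition~\ref{prop:conjugated} this gives $\Aut_{\EL}(F_{\alpha_1,\sigma}) = \gamma^{-1}\Aut_{\EL}(F_{\alpha_2,\tau})\gamma$. Both groups contain the cyclic torus of order $2^m-1$ (namely $Z^\sigma$, resp.\ $Z^\tau$), so $\gamma^{-1}Z^\tau\gamma$ and $Z^\sigma$ are two cyclic subgroups of order $2^m-1$ sitting inside the single group $\Aut_{\EL}(F_{\alpha_1,\sigma})$. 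The whole argument is about forcing these two tori to be conjugate there, and then reading off $\sigma$ from the eigenvalue data of $Z^\sigma$.

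The key bookkeeping device is that conjugation by an EL-mapping $\gamma = \left(\begin{smallmatrix} M' & 0 \\ N' & L'\end{smallmatrix}\right)$ acts on the block-diagonal elements $\left(\begin{smallmatrix} M_a & 0 \\ 0 & L_a\end{smallmatrix}\right)$ of $Z^\sigma$ by conjugating the domain block $M_a$ by $M'$ and the codomain block $L_a$ by $L'$ \emph{separately}; hence the multiset of $\overline{\F_2}$-eigenvalues on the domain and on the codomain is preserved under EL-conjugation. Writing $a\in K^*$ for a generator, the domain eigenvalues are just the Frobenius orbit $\{a^{2^i}\}$ (with multiplicity two), while the codomain eigenvalues split into the two Frobenius orbits of $a^{\sigma+1}=a^{2^k+1}$ and $a^{\sigma^2+1}=a^{2^{2k}+1}$; for $Z^\tau$ the codomain data is instead $a^{2^l+1}$ and $a^{2^{2l}+1}$. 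So once the two tori are known to be conjugate in $\Aut_{\EL}(F_{\alpha_1,\sigma})$, I would replace $\gamma$ by $\gamma h$ for a suitable $h\in\Aut_{\EL}(F_{\alpha_1,\sigma})$ (harmless, as $h$ fixes $\Gamma_{F_{\alpha_1,\sigma}}$) to arrange $\gamma^{-1}Z^\tau\gamma = Z^\sigma$, and then match these eigenvalue multisets as subsets of $\Z/(2^m-1)$.

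The conclusion then drops out from uniqueness of binary representations: after using the domain eigenvalues to pin the induced identification $K^*\to K^*$ up to a Frobenius twist, the codomain condition reads $\{(2^k+1)2^i\}\cup\{(2^{2k}+1)2^i\} = \{(2^l+1)2^i\}\cup\{(2^{2l}+1)2^i\} \pmod{2^m-1}$. Since $\gcd(k,m)=1$ and $m>2$, the exponents $k$ and $2k$ are nonzero modulo $m$, so modulo $2^m-1$ each side is a sum of two distinct powers of $2$ from $\{2^0,\dots,2^{m-1}\}$; comparing the two $\langle 2\rangle$-orbits and invoking uniqueness of binary expansions forces $2^k+1\equiv 2^{j}(2^l+1)$ for some $j$, i.e.\ $\{k,0\}\equiv\{j+l,j\}\pmod m$, whence $k\equiv l$ or $k\equiv -l\pmod m$, that is $\sigma=\tau$ or $\sigma=\tau^{-1}$. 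The built-in coincidence of the orbits of $2^k+1$ and $2^{-k}+1$ is exactly the source of the $\tau^{-1}$ alternative, consistent with Proposition~\ref{prop:inverse}. The remaining ``crossed'' matching, where $2^k+1$ pairs with the orbit of $2^{2l}+1$, collapses to $m\mid 3$ or $m\mid 5$, hence survives only for a couple of small $m$ that are either excluded or dispatched by hand.

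The hard part is the step I glossed over above: showing that $\gamma^{-1}Z^\tau\gamma$ and $Z^\sigma$ are genuinely conjugate inside $\Aut_{\EL}(F_{\alpha_1,\sigma})$, and this is exactly where the Zsygmondy prime $p$ and hypothesis~\eqref{eq:condition} enter. Here I would work with the Sylow $p$-subgroups $Z^\sigma_p$ and $\gamma^{-1}Z^\tau_p\gamma$, both cyclic of order the $p$-part of $2^m-1$. Because $p$ is Zsygmondy, an element of order $p$ acts irreducibly (Singer-like) on each $m$-dimensional block, so its centralizer in the ambient $\GL$ carries an $\F_{2^m}$-module structure that rigidly controls how such $p$-groups can sit and fuse. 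Condition~\eqref{eq:condition} says precisely that $Z^\sigma_p$ is a full Sylow $p$-subgroup of the centralizer $C_{\alpha,\sigma}$ of $Z^\sigma$ (equivalently, its unique, hence characteristic, central Sylow $p$-subgroup); combining this with Sylow's theorem and a Frattini/normalizer argument lets one first conjugate $\gamma^{-1}Z^\tau_p\gamma$ onto $Z^\sigma_p$ and then, using that both full tori centralize this common $p$-group, propagate the conjugacy from the $p$-parts to all of $Z^\tau$ and $Z^\sigma$. I expect verifying this propagation --- that condition~\eqref{eq:condition} really forbids the conjugated torus from drifting away from $Z^\sigma$ inside the centralizer --- to be the main obstacle, and the point at which the full strength of the machinery of~\cite{inequivalences} is needed; the eigenvalue and number-theoretic steps above are comparatively routine once it is in place.
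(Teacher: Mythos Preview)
The paper does not prove Theorem~\ref{thm:projequiv}; it is quoted verbatim from~\cite[Theorem~3]{inequivalences} and used as a black box, so there is no in-paper proof to compare your sketch against. That said, your outline tracks the strategy telegraphed in the surrounding discussion (Proposition~\ref{prop:conjugated}, the explicit torus $Z^\sigma$, the Zsygmondy prime~$p$, the centralizer $C_{\alpha,\sigma}$), and you correctly locate the hard step: using Sylow/Frattini together with condition~\eqref{eq:condition} to force $\gamma^{-1}Z^\tau\gamma$ and $Z^\sigma$ to be conjugate inside $\Aut_{\EL}(F_{\alpha_1,\sigma})$, after which the eigenvalue bookkeeping is routine.

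One caution on your endgame. Your own analysis shows the ``crossed'' pairing survives for $m\in\{3,5\}$. For $m=3$ this is harmless (the only admissible $k,l$ are $1,2$, already inverse modulo~$3$). For $m=5$ it is not: with $k=1$, $l=2$ the $\langle 2\rangle$-orbits in $\Z/31$ of $\{2^k{+}1,2^{2k}{+}1\}=\{3,5\}$ and of $\{2^l{+}1,2^{2l}{+}1\}=\{5,17\}$ coincide, yet $k\not\equiv\pm l\pmod 5$ and $m=5$ is not among the excluded values. So eigenvalue-multiset matching of the codomain blocks alone does not close the argument there. ``Dispatched by hand'' therefore carries real content: one must either show that the candidate conjugator fails to extend to a genuine EL-equivalence of the APN functions (the $N'$-block and the biprojective structure impose more than the torus eigenvalues record), or verify the $m=5$ case directly. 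This is consistent with your own caveat that the propagation step is where the full machinery of~\cite{inequivalences} is genuinely needed.
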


We thus only have to verify Condition~\eqref{eq:condition} for all $F_{\alpha,\sigma}$. Similar calculations are done in~\cite{inequivalences}, where it is shown that if $\gamma \in C_{\alpha,\sigma}$ then $\gamma=\left(\begin{smallmatrix}
	M & 0 \\
	0 & L
\end{smallmatrix}\right)$ where $M,L \in \GL(2,K)$ and $L$ is a diagonal matrix, this is shown in~\cite[Proof of Lemma 8]{inequivalences} and essentially hinges on the fact that the centralizer of $M_a=\{\left(\begin{smallmatrix} a & 0 \\ 0 & a \end{smallmatrix}\right) \colon a \in R\}\leq \GL(2,K)$ in the group $\GL(K^2)\cong \GL(2m,\F_2)$ is precisely $\GL(2,K)$, which was shown in~\cite[Lemma 5.7.]{golouglu2022exponential}. 

\begin{lemma}
	All elements in $C_{\alpha,\sigma}$ are of the form
	\[\begin{pmatrix}
		M & 0\\
		0 & L
	\end{pmatrix},\]
	where $M,L \in \GL(2,K) \leq GL(K^2)$ and $L$ is a diagonal matrix. For each fixed value of $c \in K^*$ there are exactly $3$ different elements in $C_{\alpha,\sigma}$ such that $\det(M)=c$.
	
	In particular, $|C_{\alpha,\sigma}|=3(2^m-1)$ and Condition~\eqref{eq:condition} is satisfied if $m>2$.
\end{lemma}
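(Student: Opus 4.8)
The plan is to identify $C_{\alpha,\sigma}$, together with its determinant statistic, with the $G$-stabilizer of the single biprojective polynomial $f_{\alpha,\sigma}$, whose order and determinant distribution are already pinned down by Proposition~\ref{prop:lemma7}. The structural half of the lemma I would simply take from the discussion preceding it: by \cite[Proof of Lemma 8]{inequivalences} — which only uses that the centralizer of the scalar matrices $\{aI:a\in R\}$ in $\GL(2m,\F_2)$ equals $\GL(2,K)$ (see~\cite{golouglu2022exponential}) together with the action of the diagonal matrices $L_a$ — every $\gamma\in C_{\alpha,\sigma}$ has the block form $\left(\begin{smallmatrix}M&0\\0&L\end{smallmatrix}\right)$ with $M\in\GL(2,K)$ and $L=\operatorname{diag}(\lambda_1,\lambda_2)$, $\lambda_1,\lambda_2\in K^*$. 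Conversely every EL-mapping of this shape commutes with $Z^\sigma$ (scalar matrices commute with $K$-linear maps, and diagonal matrices with diagonal matrices), so $C_{\alpha,\sigma}$ is exactly the set of such $\gamma$ lying in $\Aut_{\EL}(F_{\alpha,\sigma})$.

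The next step is to translate this membership into the group action. For $\gamma$ of the above form, $\gamma(\Gamma_{F_{\alpha,\sigma}})=\Gamma_{F_{\alpha,\sigma}}$ amounts to $L\circ F_{\alpha,\sigma}=F_{\alpha,\sigma}\circ M$, i.e.\ to $\lambda_1 f_{\alpha,\sigma}(v)=f_{\alpha,\sigma}(Mv)$ and $\lambda_2 g_{\alpha,\sigma}(v)=g_{\alpha,\sigma}(Mv)$ for all $v\in K^2$. Using the identification of biprojective polynomials with $2\times 2$ matrices and the rule $Mf(x,y)=f(c_1x+c_2y,c_3x+c_4y)$ (and that replacing $M$ by $M^t$ leaves the determinant unchanged), these two conditions say precisely that $(\lambda_1^{-1},M)$ stabilizes $f_{\alpha,\sigma}$ under the $G$-action attached to $\sigma$ and $(\lambda_2^{-1},M)$ stabilizes $g_{\alpha,\sigma}$ under the $G$-action attached to $\sigma^2$ (exactly the action appearing in Eq.~\eqref{eq:g}). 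Now Lemma~\ref{lem:action}, in the case $\alpha_1=\alpha_2=\alpha$, tells us that the first condition already forces the second, with $\lambda_2^{-1}$ equal to the unique $a'$ provided by Lemma~\ref{lem:action}; and since a given $M$ admits at most one accompanying scalar in a $G$-stabilizer ($c\cdot h=h$ forces $c=1$ for $h\in\{f_{\alpha,\sigma},g_{\alpha,\sigma}\}$), the assignment $\gamma\mapsto(\lambda_1^{-1},M)$ is a bijection from $C_{\alpha,\sigma}$ onto the $G$-stabilizer of $f_{\alpha,\sigma}$ that sends $\det M$ to the determinant of the matrix part.

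It then remains to apply Proposition~\ref{prop:lemma7}. The polynomial $f_{\alpha,\sigma}=x^{\sigma+1}+xy^\sigma+\alpha y^{\sigma+1}$ has leading coefficient $1\neq0$, and by the defining hypothesis on $\alpha$ (Theorem~\ref{thm:new}), $f_{\alpha,\sigma}(x,1)=x^{\sigma+1}+x+\alpha$ has no root in $K$; hence its $G$-stabilizer has order $3(2^m-1)$ and, for each $c\in K^*$, contains exactly three elements whose matrix part has determinant $c$. Transporting these facts through the bijection gives both claims about $C_{\alpha,\sigma}$, in particular $|C_{\alpha,\sigma}|=3(2^m-1)$. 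Finally $|C_{\alpha,\sigma}|/(2^m-1)=3$, so Condition~\eqref{eq:condition} reads ``$p\nmid 3$''; a Zsygmondy prime $p$ of $2^m-1$ cannot divide $2^2-1=3$ as soon as $m>2$, so $p\neq3$ and Condition~\eqref{eq:condition} holds.

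The main obstacle is the bookkeeping in the translation step: making sure the EL-automorphism condition matches the two $G$-actions on the nose — the transpose convention, and the fact that the second component genuinely lives over $\sigma^2$ so that Lemma~\ref{lem:action} applies as stated — and checking that the structural result of \cite{inequivalences}, proved there for general biprojective APN functions, specializes verbatim to the subfamily $F_{\alpha,\sigma}$. Once that is in place, Lemma~\ref{lem:action} carries the real weight and the count follows immediately from Proposition~\ref{prop:lemma7}.
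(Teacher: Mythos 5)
Your proposal is correct and follows essentially the same route as the paper: take the block/diagonal form from the preceding discussion of \cite[Proof of Lemma 8]{inequivalences}, translate $L\circ F_{\alpha,\sigma}=F_{\alpha,\sigma}\circ M$ into membership of $(a,M)$ in the $G$-stabilizer of $f_{\alpha,\sigma}$, invoke Lemma~\ref{lem:action} to see that the $g$-component condition is automatic with a unique scalar, and then read off the count and determinant distribution from Proposition~\ref{prop:lemma7}, with the Zsygmondy prime argument $p\nmid 3=2^2-1$ for $m>2$. The extra bookkeeping you flag (transpose convention, inverse scalars) does not affect the determinant statistic or the count, and the paper glosses over it in the same way.
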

\begin{proof}
	From the discussion above, it is already clear that all elements in $C_{\alpha,\sigma}$ are of the form mentioned above. It remains to show that for each fixed value of $c \in K^*$ there are exactly $3$ different elements in $C_{\alpha,\sigma}$ such that $\det(M)=c$. \\
	So let us assume we have $\left(\begin{smallmatrix}
		M & 0\\
		0 & L
	\end{smallmatrix} \right) \in C_{\alpha,\sigma}.$ We then have $L\circ F_{\alpha,\sigma}=F_{\alpha,\sigma}\circ M$ Set $M=\left(\begin{smallmatrix}
		c_1 & c_2\\
		c_3 & c_4
	\end{smallmatrix} \right)$ and $L=\left(\begin{smallmatrix}
		a & 0\\
		0& a'
	\end{smallmatrix} \right)$. Then $L\circ F_{\alpha,\sigma}=F_{\alpha,\sigma}\circ M$ is equivalent to
	\begin{align*}
		a f_{\alpha,\sigma}(x,y)&=f_{\alpha,\sigma}(c_1x+c_2y,c_3x+c_4y) \\
			a' g_{\alpha,\sigma}(x,y)&=g_{\alpha,\sigma}(c_1x+c_2y,c_3x+c_4y). 	
	\end{align*}
	By Lemma~\ref{lem:action}, if the first equation is satisfied, there is precisely one $a'$ that satisfied the second equation. We thus only have to check the first equation, which is equivalent to $(a,M)$ being in the stabilizer of the group action defined by $G$ in Section~\ref{sec:equiv}. The result then follows immediately with Proposition~\ref{prop:lemma7}. \\
	Note that the Zsygmondy prime $p$ does not divide $2^r-1$ for $r<m$, in particular it does not divide $3=2^2-1$ if $m>2$, so Condition~\eqref{eq:condition} is satisfied.
	
\end{proof}

Having proven Condition~\eqref{eq:condition}, Theorem~\ref{thm:projequiv} together with Proposition~\ref{prop:inverse} and Theorem~\ref{thm:newold} yield the main result of this section.
\begin{theorem} \label{thm:count}
	 Let $m>2$ and $m \notin \{4,6\}$. Let further $F_{\alpha_1,\sigma}$ and $F_{\alpha_2,\tau}$ be two APN functions from Theorem~\ref{thm:new} defined on $K^2=\F_{2^m}^2$ with field automorphisms $\sigma,\tau$. The two functions are CCZ-equivalent if and only if $\sigma=\tau$ or $\sigma=\overline{\tau}$, where $\overline{\tau}$ is the inverse of $\tau$ is the automorphism group of $K$. \\
	There are in total $\varphi(m)/2$ CCZ-inequivalent functions in the family from Theorem~\ref{thm:new} defined on $K^2$.
\end{theorem}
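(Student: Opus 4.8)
The plan is to reduce the CCZ-equivalence question to EL-equivalence and then read everything off from the results already assembled. First I would note that every $F_{\alpha,\sigma}$ from Theorem~\ref{thm:new} is quadratic and satisfies $F_{\alpha,\sigma}(0,0)=(0,0)$, so Theorem~\ref{thm:yoshiara} allows me to replace ``CCZ-equivalent'' by ``EL-equivalent'' throughout, and it thus suffices to settle EL-equivalence.

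To see that $\sigma=\tau$ or $\sigma=\overline{\tau}$ yields equivalence, I would chain two facts. If $\sigma=\tau$, Theorem~\ref{thm:newold} already says $F_{\alpha_1,\sigma}$ and $F_{\alpha_2,\sigma}$ are linear equivalent (the value of the parameter is irrelevant). If $\sigma=\overline{\tau}$, Proposition~\ref{prop:inverse} gives that $F_{\alpha_2,\tau}$ is linear equivalent to $F_{\alpha_2^{\overline{\tau}},\overline{\tau}}=F_{\alpha_2^{\overline{\tau}},\sigma}$ --- the remark following Proposition~\ref{prop:inverse} guaranteeing that $\alpha_2^{\overline{\tau}}$ is again an admissible parameter for $\sigma$ --- and a further application of Theorem~\ref{thm:newold} connects this to $F_{\alpha_1,\sigma}$. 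For the converse, I would invoke the Lemma just proved, which verifies Condition~\eqref{eq:condition} for all admissible $(\alpha,\sigma)$ as soon as $m>2$; combined with the standing hypotheses $m>2$ and $m\notin\{4,6\}$, Theorem~\ref{thm:projequiv} then forces that EL-equivalence of $F_{\alpha_1,\sigma}$ and $F_{\alpha_2,\tau}$ can only occur when $\sigma=\tau$ or $\sigma=\tau^{-1}=\overline{\tau}$. This establishes the claimed equivalence criterion.

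For the count, I would parametrize the admissible field automorphisms: these are the maps $x\mapsto x^{2^k}$ of order $m$, i.e.\ those with $0<k<m$ and $\gcd(k,m)=1$, of which there are exactly $\varphi(m)$. By the criterion just proved, $F_{\alpha_1,\sigma}$ and $F_{\alpha_2,\tau}$ are CCZ-equivalent precisely when the associated exponents satisfy $k'\in\{k,m-k\}$, so the CCZ-classes are exactly the orbits of the involution $k\mapsto m-k$ on the residues coprime to $m$. This involution is fixed-point-free once $m>2$: a fixed $k$ would satisfy $m\mid 2k$, which with $\gcd(k,m)=1$ forces $m\mid 2$. Hence all orbits have size $2$ (so $\varphi(m)$ is even), there are $\varphi(m)/2$ of them, and the family contains exactly $\varphi(m)/2$ CCZ-inequivalent functions.

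The genuinely substantive work --- Theorem~\ref{thm:projequiv} and the stabilizer computation behind Condition~\eqref{eq:condition} --- has already been carried out, so I do not expect a serious obstacle here; the only points needing a moment's care are the preservation of admissibility of the parameter $\alpha$ when passing from $\sigma$ to $\overline{\sigma}$, and the elementary observation that $k\mapsto m-k$ has no fixed point among residues coprime to $m$ when $m>2$.
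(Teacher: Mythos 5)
Your proposal is correct and follows essentially the same route as the paper, which obtains Theorem~\ref{thm:count} by combining Theorem~\ref{thm:projequiv} (with Condition~\eqref{eq:condition} verified by the preceding lemma), Proposition~\ref{prop:inverse}, Theorem~\ref{thm:newold}, and the reduction from CCZ- to EL-equivalence via Theorem~\ref{thm:yoshiara}. The paper leaves these steps as a one-sentence remark; your write-up simply supplies the details, including the correct counting of orbits of the fixed-point-free involution $k\mapsto m-k$ on residues coprime to $m$.
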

Note that the result also holds in the $m=2$ and $m=4$ case because in both cases, the only admissible field automorphisms are $\sigma \colon x \mapsto x^2$ and (in the $m=4$ case) its inverse, so Theorem~\ref{thm:newold} and Proposition~\ref{prop:inverse} alone already yield the result. For the $m=6$ case, we have checked by computer that the result also holds. We want to emphasize that this means that the choice of $\alpha$ in Theorem~\ref{thm:new} does not matter at all in the sense that any choice of $\alpha$ leads to equivalent functions. The original family by G\"olo\u{g}lu just chose $\alpha=1$, the only "gap" was that  $\alpha=1$ does not satisfy the condition that $X^{\sigma+1}+X+\alpha$ must not have any roots in $\F_{2^m}$ if 3 divides $m$.  

\section{The image sets and Walsh spectra of the APN functions}\label{sec:four}

One of the most important properties of APN functions is their Walsh spectrum, which captures how a resistant a function is towards linear attacks when used as a function in a substitution permutation network, we again refer the reader to~\cite{Carlet2021} for details. 
\begin{definition}
	Let $F \colon \F_{2^n} \rightarrow \F_{2^n}$ be a mapping. We define
	\[W_F(b,a) = \sum_{x \in \F_{2^n}}(-1)^{\Tr(bF(x)+ax)} \in \Z\]
	for all $a,b \in \F$. We call the multisets
	\[  \{*W_F(b,a) \colon b \in \F_{2^n}^*, a\in \F_{2^n}*\}  \text{ and }
	   \{*|W_F(b,a)|\colon b \in \F_{2^n}^*, a\in \F_{2^n}*\} \]
	the \emph{Walsh spectrum} and the \emph{extended Walsh spectrum} of $F$, respectively.
\end{definition} 

The extended Walsh spectrum is invariant under CCZ-equivalence. Most known APN functions in even dimension $n$ have the so called \emph{classical} (or Gold-like) extended Walsh spectrum, which contains the values $0,2^{n/2},2^{(n+2)/2}$ precisely $(2^n-1)2^{n-2}$ times, $(2^n-1)2^{n+1}/3$ times and $(2^n-1)2^n/3$ times, respectively. 

Recently, the following simple criterion to determine the Walsh spectrum of an APN function from its value distributions was found.

\begin{theorem}{\cite[Theorem 1]{kkk}} \label{prop:walsh}
	Let $n$ be even and $F \colon \F_{2^n} \rightarrow \F_{2^n}$ be a quadratic APN function such that 
	\begin{itemize}
		\item $F(0)=0$, and
		\item Every $y \in \image(F)\setminus \{0\}$ has at least $3$ preimages. 
	\end{itemize}
	Then $F(x)=0$ if and only $x=0$ and every $y \in \image(F)\setminus \{0\}$ has precisely $3$ preimages (i.e., $F$ is $3$-to-$1$). Additionally, $F$ has classical Walsh spectrum.
\end{theorem}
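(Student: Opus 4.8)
The statement splits into two claims: that the hypotheses already force $F$ to be exactly $3$-to-$1$ with $F^{-1}(0)=\{0\}$, and that such an $F$ has classical spectrum. I would prove them in this order, since the value distribution is the input to the spectral part.

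\textbf{Value distribution.} Since $F$ is quadratic with $F(0)=0$, the map $B(x,y):=F(x+y)+F(x)+F(y)$ is a symmetric $\F_2$-bilinear form and $\Delta_a F(x)=B(x,a)+F(a)$ is affine for every $a$. The APN property means that for each $a\neq 0$ the linear map $x\mapsto B(x,a)$ has kernel $\{0,a\}$, so $\Delta_a F$ is $2$-to-$1$ onto an affine hyperplane; in particular the number of $x$ with $F(x+a)=F(x)$ is $0$ or $2$. Call $a\neq0$ a \emph{collision direction} when it is $2$, and let $D$ be the number of such directions. Writing $N(y)=|F^{-1}(y)|$ and counting ordered pairs $(x,x')$ with $F(x)=F(x')$ in two ways gives $\sum_y N(y)^2=2^n+2D$, while trivially $D\le 2^n-1$. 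Now I use the hypothesis $N(y)\ge 3$ for every nonzero image value, so $\sum_{y\neq 0}N(y)(N(y)-3)\ge 0$. Substituting $\sum_{y\neq 0}N(y)^2=2^n+2D-N(0)^2$ and $\sum_{y\neq 0}N(y)=2^n-N(0)$ and using $D\le 2^n-1$ yields $(N(0)-1)(N(0)-2)\le 0$, hence $N(0)\in\{1,2\}$. In either case equality is forced throughout, so $D=2^n-1$ and $N(y)\in\{0,3\}$ for all $y\neq 0$; consequently $\sum_{y\neq0}N(y)=2^n-N(0)$ must be divisible by $3$. As $n$ is even, $2^n\equiv 1\pmod 3$, so $2^n-2\equiv 2\pmod 3$ is not divisible by $3$, ruling out $N(0)=2$. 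Thus $N(0)=1$ and $F$ is exactly $3$-to-$1$, with $(2^n-1)/3$ nonzero image values, proving the first assertion.

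\textbf{Walsh spectrum.} Quadraticity makes every component $F_b=\Tr(bF)$, $b\neq0$, plateaued, so $W_F(b,a)\in\{0,\pm 2^{(n+s_b)/2}\}$ with $s_b=\dim \rad(B_b)$ even, where $B_b(x,a)=\Tr(bB(x,a))$. The APN fourth-moment identity $\sum_{a,b}W_F(b,a)^4=2^{2n}(3\cdot 2^{2n}-2^{n+1})$ gives, after removing the $b=0$ term and using $\sum_a W_F(b,a)^4=2^{3n+s_b}$, the relation $\sum_{b\neq0}2^{s_b}=2(2^n-1)$. (Equivalently: for each direction $a\neq0$ there is a unique $b=\beta(a)$ whose hyperplane $b^\perp$ equals $\image B(\,\cdot\,,a)$, i.e.\ with $a\in\rad(B_b)$, so the sets $\rad(B_b)\setminus\{0\}$ partition the directions and $\sum_{b\neq0}(2^{s_b}-1)=2^n-1$.) If I can show $s_b\le 2$ for every $b$, then $s_b\in\{0,2\}$, and solving $\sum 2^{s_b}=2(2^n-1)$ together with $n_0+n_2=2^n-1$ forces $n_2=(2^n-1)/3$ components of amplitude $2^{(n+2)/2}$ and $n_0=2(2^n-1)/3$ bent components; the multiplicities of the values $0,2^{n/2},2^{(n+2)/2}$ in the spectrum then follow from the fact that each component contributes $2^{n-s_b}$ nonzero Walsh values. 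This is the classical spectrum.

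\textbf{Main obstacle.} The crux is precisely the bound $s_b\le 2$. The second moment (Parseval) and the fourth moment (APN) do not see the $3$-to-$1$ property at all -- they agree for \emph{every} quadratic APN function, including those with non-classical spectrum -- so the value distribution must be fed in here, and no purely moment-based shortcut can work. My plan is to exploit the rigid structure produced in the first part: the $2^n-1$ collision directions are partitioned by the $3$-to-$1$ fibers into $(2^n-1)/3$ two-dimensional subspaces, since the three pairwise differences of a fiber $\{u_1,u_2,u_3\}$ satisfy $(u_1+u_2)+(u_1+u_3)=u_2+u_3$ and hence form (with $0$) a plane $W_y$, while each direction lies in a unique fiber because its collision pair is unique. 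Thus the collision directions carry a $2$-spread, and the radicals $\rad(B_b)\setminus\{0\}$ give a second partition of the same directions. I would then show that each radical is contained in (and, when nonzero, equals) a single spread plane, by analysing how $\rad(B_b)$ can meet a plane $W_y$ through the second-order derivatives $D_dD_{d'}F_b$, and arguing that a radical of dimension $\ge 4$ would force coincidences among the derivative hyperplanes $\image B(\,\cdot\,,a)$ that are incompatible with the uniqueness of collision pairs guaranteed by APN. The Gold case is the guiding prototype: there $\beta(a)=F(a)^{-1}$ is constant on each plane and injective across planes, yielding $s_b\in\{0,2\}$ at once; the work is to establish this dichotomy intrinsically from the spread rather than from an explicit formula.
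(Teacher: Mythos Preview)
The paper does not prove this theorem; it is quoted from the reference labelled \texttt{kkk} and used as a black box in the proof of Theorem~\ref{thm:31}. There is therefore no proof in the present paper to compare your attempt against.

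Assessing your proposal on its own merits: the value-distribution half is complete and correct. The counting identity $\sum_y N(y)^2 = 2^n + 2D$, the inequality $\sum_{y\ne0}N(y)(N(y)-3)\ge0$ coming from the hypothesis, and the trivial bound $D\le 2^n-1$ combine exactly as you say to give $(N(0)-1)(N(0)-2)\le0$; in either admissible case equality is forced throughout, and the congruence $2^n\equiv 1\pmod 3$ eliminates $N(0)=2$. This part stands as a proof.

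The Walsh-spectrum half, however, is a plan rather than a proof. You correctly reduce everything to the bound $s_b\le 2$ for every nonzero component, and you correctly note that the second and fourth Walsh moments cannot yield this by themselves since they hold for \emph{every} quadratic APN function. You also correctly extract from the first part a $2$-spread structure on the nonzero directions: the three pairwise differences of each $3$-element fiber span a plane, and these $(2^n-1)/3$ planes partition $\F_{2^n}\setminus\{0\}$ because the APN property makes each collision pair unique. But the crucial step --- that every radical $\rad(B_b)$ is contained in a single spread plane, equivalently that $\beta$ is constant on each spread plane and injective across planes --- is only asserted, not proven. Your sentence ``arguing that a radical of dimension $\ge 4$ would force coincidences among the derivative hyperplanes \ldots\ incompatible with the uniqueness of collision pairs'' names an intention, and you close by saying ``the work is to establish this dichotomy intrinsically''. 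The Gold computation is a helpful sanity check but does not supply the missing argument for a general quadratic APN $3$-to-$1$ map. Until you produce a concrete mechanism forcing $\image B(\cdot,a)=\image B(\cdot,a')$ whenever $a,a'$ lie in the same collision plane (or an alternative route to $s_b\le 2$), the spectral claim remains open in your write-up.
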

Note that APN $3$-to-$1$ functions are the functions with the smallest possible image sets for APN functions in even dimension~\cite{kkk}.

\begin{theorem} \label{thm:31}
	$F_{\alpha,\sigma}$ is $3$-to-$1$ and has classical Walsh spectrum for all admissible values of $\alpha,\sigma$.
\end{theorem}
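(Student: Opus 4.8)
Looking at the problem: I need to prove that $F_{\alpha,\sigma}$ is $3$-to-$1$ and has classical Walsh spectrum. The tool available is Theorem~\ref{prop:walsh} from~\cite{kkk}, which requires: (i) $F(0)=0$, and (ii) every nonzero element of the image has at least $3$ preimages. Condition (i) is immediate since all terms in $F_{\alpha,\sigma}$ are biprojective (homogeneous of degree $\sigma+1$ or $\sigma^2+1$), so $F_{\alpha,\sigma}(0,0)=(0,0)$.

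\textbf{Proof proposal.}

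The plan is to verify the hypotheses of Theorem~\ref{prop:walsh}. The condition $F_{\alpha,\sigma}(0,0)=(0,0)$ is immediate because both coordinate functions $f_{\alpha,\sigma}$ and $g_{\alpha,\sigma}$ are biprojective polynomials, hence have no constant term. So the real work is to show that every nonzero value in the image of $F_{\alpha,\sigma}$ has at least three preimages. For a quadratic function this is a statement about the derivatives: since $F_{\alpha,\sigma}$ is APN and quadratic, for each nonzero $(a,b)\in K^2$ the derivative $\Delta_{(a,b)}F_{\alpha,\sigma}$ is an $\F_2$-linear map with a $1$-dimensional kernel, and the preimage set $F_{\alpha,\sigma}^{-1}(v)$ of any value $v$, if nonempty, is an affine subspace of dimension equal to $\dim_{\F_2}\{(a,b): \Delta_{(a,b)}F_{\alpha,\sigma}\equiv 0 \text{ on the coset}\}$; more directly, $|F_{\alpha,\sigma}^{-1}(v)|\ge 3$ for all $v\neq 0$ in the image is equivalent (for a quadratic APN $F$ with $F(0)=0$) to the statement that $F_{\alpha,\sigma}^{-1}(0)=\{0\}$ together with the fact that the preimages come in the affine-subspace structure forced by quadraticity — but the cleanest route is to directly invoke the following well-known dichotomy: for a quadratic $F$ with $F(0)=0$, either $F$ is $3$-to-$1$ on its nonzero image or some nonzero value has exactly one preimage, and the latter is ruled out by showing the linear part $B_F((a,b),(c,d)) = F(x+(a,b))+F(x+(c,d))+F(x+(a,b)+(c,d))+F(x)$ argument. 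I would instead argue as follows: it suffices to show $F_{\alpha,\sigma}(x,y)=(0,0)$ forces $(x,y)=(0,0)$, AND that whenever $F_{\alpha,\sigma}(x_0,y_0)=v\neq 0$, there is a nonzero $(a,b)$ with $\Delta_{(a,b)}F_{\alpha,\sigma}(x_0,y_0)=0$, which then yields the third preimage $(x_0,y_0)+(a,b)+(a',b')$ from the $1$-dimensional kernel — this is automatic from quadraticity and APN-ness provided the value $0$ has the single preimage $0$.

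Concretely, I would reduce everything to the claim: $F_{\alpha,\sigma}(x,y)=(0,0)$ implies $(x,y)=(0,0)$. Indeed $f_{\alpha,\sigma}(x,y)=x^{\sigma+1}+xy^\sigma+\alpha y^{\sigma+1}=0$; if $y\neq 0$, dividing by $y^{\sigma+1}$ and setting $t=x/y$ gives $t^{\sigma+1}+t+\alpha = 0$, contradicting the hypothesis of Theorem~\ref{thm:new} that $X^{\sigma+1}+X+\alpha$ has no root in $K$; hence $y=0$, and then $f_{\alpha,\sigma}(x,0)=x^{\sigma+1}=0$ gives $x=0$. This shows $F_{\alpha,\sigma}^{-1}(0)=\{(0,0)\}$. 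Now, for a quadratic APN function $F$ with $F(0)=0$ and $F^{-1}(0)=\{0\}$, every nonzero element of the image automatically has at least (in fact exactly) $3$ preimages: this is precisely the content that makes the hypotheses of Theorem~\ref{prop:walsh} self-consistent, but to be safe I would spell out the short argument — the graph-related counting or, better, observe that by the CCZ-/EL-equivalence reduction and Theorem~\ref{thm:newold} all the $F_{\alpha,\sigma}$ with a fixed $\sigma$ are linearly equivalent, and $3$-to-$1$-ness is preserved under linear equivalence, so it suffices to treat one representative. Since for $3\nmid m$ the representative is G\"olo\u{g}lu's original family (Theorem~\ref{thm:orig}), for which the $3$-to-$1$ property and classical spectrum are already known, only the case $3\mid m$ needs a genuinely new argument.

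So the endgame, and the step I expect to be the main obstacle, is the case $3\mid m$: here Theorem~\ref{thm:newold} does not connect $F_{\alpha,\sigma}$ to the original family, so I must verify condition (ii) of Theorem~\ref{prop:walsh} intrinsically. The strategy is: having established $F_{\alpha,\sigma}^{-1}(0)=\{(0,0)\}$ above, use the standard fact that for a quadratic APN $F:\F_{2^n}\to\F_{2^n}$ the image-set sizes and preimage-count distribution are governed by $\dim\ker$ of the associated bilinear/derivative map, and that $F^{-1}(0)=\{0\}$ together with $F(0)=0$ already forces the value distribution to be "$0$ once, everything else in the image exactly three times" — equivalently one invokes Theorem~\ref{prop:walsh} itself, whose first bullet we have and whose second bullet ("at least $3$ preimages") is what needs checking. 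To check the second bullet directly: given $(x_0,y_0)$ with $F_{\alpha,\sigma}(x_0,y_0)\neq 0$, I would exhibit a nonzero $(a,b)$ with $\Delta_{(a,b)}F_{\alpha,\sigma}(x_0,y_0)=0$ by analyzing the system $\Delta_{(a,b)}f_{\alpha,\sigma}(x_0,y_0)=0$, $\Delta_{(a,b)}g_{\alpha,\sigma}(x_0,y_0)=0$, which is $\F_2$-linear in $(a,b)$; a nonzero solution exists unless this $2m\times 2m$ system has full rank, and showing it is rank-deficient at every $(x_0,y_0)\neq$ (a fiber of $0$) is where the biprojective structure and the no-root condition on $X^{\sigma+1}+X+\alpha$ must be used again — likely mirroring the derivative computations already appearing in the APN-ness proof of Theorem~\ref{thm:new} in~\cite{calderini}. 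Once (ii) holds, Theorem~\ref{prop:walsh} delivers both the $3$-to-$1$ conclusion and the classical Walsh spectrum, completing the proof.
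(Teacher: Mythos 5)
Your setup is right (apply Theorem~\ref{prop:walsh}, note $F_{\alpha,\sigma}(0,0)=(0,0)$) and your computation that $F_{\alpha,\sigma}^{-1}(0,0)=\{(0,0)\}$ via the no-root condition on $X^{\sigma+1}+X+\alpha$ is correct, but the heart of the theorem --- that every nonzero image value has at least $3$ preimages --- is never actually established. Your first route, the claim that $F(0)=0$ together with $F^{-1}(0)=\{0\}$ ``automatically'' forces at least $3$ preimages for a quadratic APN function in even dimension, is not a theorem you can invoke: Theorem~\ref{prop:walsh} takes the ``at least $3$ preimages'' condition as a \emph{hypothesis} and derives $F^{-1}(0)=\{0\}$ as a conclusion, not the reverse, and a simple double count only gives $\sum_{v\neq 0} N_v(N_v-1)\le 2(2^n-1)$, which is consistent with some fibers having size $1$. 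Your second route (reduce via Theorem~\ref{thm:newold} to a single representative per $\sigma$) is sound as a reduction but only helps when $3\nmid m$ and only if the $3$-to-$1$ property of the original family is citable; for $3\mid m$, which is exactly the new case, you concede a fresh argument is needed. Your third route is a sketch, not a proof, and it contains an error: $\Delta_{(a,b)}F_{\alpha,\sigma}(x_0,y_0)=B\bigl((a,b),(x_0,y_0)\bigr)+F_{\alpha,\sigma}(a,b)$ is \emph{quadratic} in $(a,b)$, not $\F_2$-linear, so the ``rank-deficient linear system'' framing does not apply; moreover a single nonzero solution $(a,b)$ would only produce a second preimage, not a third.

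The idea you are missing is the one the paper uses, and it bypasses all derivative computations: by Proposition~\ref{prop:lemma7} (together with Lemma~\ref{lem:action} and the determinant conditions from Eqs.~\eqref{eq:f} and~\eqref{eq:g}, which force $\det(M)=1$), the stabilizer of $f_{\alpha,\sigma}$ contains exactly three elements $(1,M)$ with $\det(M)=1$, yielding three matrices $I,M,M^2$ with $M^3=I$ and $F_{\alpha,\sigma}\circ M=F_{\alpha,\sigma}$. One then checks that $M$ has no eigenvalue $1$ (for $m$ odd because $X^2+X+1$ is irreducible over $K$; for $m$ even because an eigenvalue $1$ would force $\det(M)=\omega\in\F_4\setminus\F_2$, contradicting $\det(M)=1$), so $(x,y)$, $M(x,y)$, $M^2(x,y)$ are three distinct preimages of $F_{\alpha,\sigma}(x,y)$ for every $(x,y)\neq(0,0)$. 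This verifies the second hypothesis of Theorem~\ref{prop:walsh} uniformly in $\alpha$, $\sigma$ and $m$, with no case split on $3\mid m$.
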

\begin{proof}
	We apply Theorem~\ref{prop:walsh}. It is immediate that $F_{\alpha,\sigma}(0,0)=(0,0)$. 
	Let us consider the equation $F_{\alpha,\sigma}\circ M = F_{\alpha,\sigma}$ for $M \in \GL(2,K)\leq \GL(K^2)$. This is equivalent to $(1,M)$ being both in the stabilizer of $f_{\alpha,\sigma}$ and $g_{\alpha,\sigma}$ where we use again the group action defined in Section~\ref{sec:equiv}. The matrix equations that characterize $(1,M)$ being in the stabilizers are written down in Eqs.~\eqref{eq:f} and~\eqref{eq:g} and, taking again determinants, lead to $\det(M)^{\sigma+1}=\det(M)^{\sigma^2+1}=1$, which is equivalent to $\det(M)=1$. 
	By Proposition~\ref{prop:lemma7}, there are thus 3 distinct $M$ (one of them being the identity matrix $I$) such that $(1,M)$ is in the stabilizer of $f_{\alpha,\sigma}$. For these 3 matrices, we have $F_{\alpha,\sigma}(M(x,y))= F_{\alpha,\sigma}(x,y)$ for all $x,y \in K$.  Let us say the three matrices with this property are $I,M,N$. Clearly, if $M$ has this property, then so does $M^i$ for any $i \in \N$, this immediately implies that $N=M^2$ and $M^3=I$. 
	
	To show that $F_{\alpha,\sigma}$ is $3$-to-$1$, it is enough to show that $M$ has only $(0,0)$ as a fixed point, i.e. $M(x,y)\neq (x,y)$ for all $(0,0)\neq (x,y) \in K^2$, which is equivalent to $M$ not having $1$ as an eigenvalue. Since $M^3=I$ the minimal polynomial of $M$ has to be a divisor of $X^3+1=(X+1)(X^2+X+1)$ but not $X+1$ since $M\neq I$. Note that $X^2+X+1$ has the roots $\omega \in \F_4\setminus \F_2$, so it is an irreducible polynomial if $m$ is odd. In this case, the minimal polynomial has to be $X^2+X+1$ and $1$ is not an eigenvalue. If $m$ is even, then $X^3+1=(X+1)(X+\omega_1)(X+\omega_2)$ where $\omega_1,\omega_2\in\F_4\setminus \F_2$. Assume $M=\left(\begin{smallmatrix} a & b \\c & d\end{smallmatrix}\right)$ has eigenvalue $1$, so the characteristic polynomial is $\chi_M(X)=(X+1)(X+\omega)=X^2+(\omega+1)X+\omega$ for some $\omega \in \F_4\setminus \F_2$. On the other hand $\chi_M(X)=\det\left(\begin{smallmatrix} X+a & b \\c & X+d\end{smallmatrix}\right)=X^2+(a+d)X+ad+bc$, so comparing the constant coefficent we have $\omega=ad+bc$. This contradicts $ad+bc=\det(M)=1$. We conclude that $1$ is not an eigenvalue of $M$, so $M(x,y)\neq(x,y)$ for any $(x,y)\neq(0,0)$. 
	
	So $F_{\alpha,\sigma}(x,y)=F_{\alpha,\sigma}(M(x,y))=F_{\alpha,\sigma}(M^2(x,y))$ for any $(x,y) \in K^2$, where $(x,y)$, $M(x,y)$, $M^2(x,y)$ are pairwise distinct if $(x,y)\neq (0,0)$. So all conditions of Theorem~\ref{prop:walsh} are satisfied and the result follows.
\end{proof}
This is particularly interesting since $F_{\alpha,\sigma}$ is $3$-$1$ for \emph{all} admissible $\alpha$. Recall that by Theorem~\ref{thm:newold} different $\alpha$ (for the same $\sigma$) yield CCZ-equivalent functions. This means that in the equivalence CCZ equivalence class of $F_{\alpha,\sigma}$ there are many $3$-to-$1$ functions. More precisely, there are at least $\frac{2^{m+1}-2}{6}$ or $\frac{2^{m+1}+2}{6}$ (depending on $m$ even or odd) such functions in the CCZ equvialence class of $F_{\alpha,\sigma}$ since this is the number of polynomials $p=X^{\sigma+1}+X+\alpha \in K[X]$ with no roots in $K$, see~\cite[Theorem 5.6.]{bluher}.

\section{Conclusion and open problems}
In this note we proved that the recent extension of an APN family of G\"olo\u{g}lu only yields new APN functions on $\F_{2^{2m}}$ if $3| m$ (Theorem~\ref{thm:newold}). In this case, we counted the number of inequivalent functions the extension yields (Theorem~\ref{thm:count}) and we showed that all functions in the family are $3$-to-$1$ and have classical Walsh spectrum (Theorem~\ref{thm:31}). These results hinge on group theoretic tools developed in general for biprojective functions in~\cite{inequivalences}, based on large cyclic subgroups in the automorphism groups of these functions. We state some interesting open problems:

\begin{enumerate}
	\item Is it possible to generalize the group theoretic tools from~\cite{inequivalences} from biprojective functions to a wider class of functions? Note that a similar approach had already been used for power functions~\cite{DempwolffPower,YoshiaraPower} before.
	\item Give a lower bound on the number of APN functions that is better than the current bound given in~\cite{kasperszhou}. 
\end{enumerate}

Based on a similar conjecture for the (on a theoretical level) related combinatorial objects of semifields by Kantor~\cite{kantor2006finite}, in particular the explicit construction of semifields in~\cite{Kantor03}, we conjecture the following:
\begin{conjecture}
	Let $N(Q)$ be the number of APN functions on $\F_Q$. $N(Q)$ is not bounded from above by a polynomial.  
\end{conjecture}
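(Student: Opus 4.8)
This statement is an open conjecture, so any proof proposal is necessarily a research programme rather than a routine argument; I sketch the line of attack I would pursue, modelled on Kantor's treatment of commutative semifields. The overall plan is to exhibit, for infinitely many even $n$, an explicit family $\mathcal{F}_n$ of quadratic APN functions on $\F_{2^n}$ whose cardinality grows faster than any polynomial in $Q=2^n$, and then to bound from above the number of family members lying in a single CCZ-equivalence class. If $|\mathcal{F}_n| \geq 2^{\omega(n)}$ while every equivalence class meets $\mathcal{F}_n$ in at most $\mathrm{poly}(Q)$ functions, then $N(Q) \geq |\mathcal{F}_n|/\mathrm{poly}(Q) = Q^{\omega(1)}$, which is superpolynomial. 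This is precisely the shape of Kantor's argument: a parametrized presemifield construction produces a parameter space of dimension growing with the order, and an isotopy-invariant cuts the family into superpolynomially many classes.

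The first and most delicate ingredient is a construction with a genuinely high-dimensional parameter space. The biprojective functions studied here are parametrized only by the pair $(\alpha,\sigma)$, and Theorem~\ref{thm:count} shows they yield a mere $\varphi(m)/2 = O(\log Q)$ inequivalent functions — hopelessly far from superpolynomial. The plan is therefore to pass to higher ``block'' generalizations (tri- or $\ell$-projective analogues, or a characteristic-$2$ transport of Kantor's high-dimensional presemifield families), where the defining datum is a tuple $(\alpha_1,\dots,\alpha_t)$, a subspace, or a linearized polynomial, so that $|\mathcal{F}_n|\geq 2^{c\,tm}$ with both $t$ and $m$ growing. Proving that the entire family is APN is a prerequisite: as in the quadratic case one reduces this, via the one-dimensional-kernel criterion for discrete derivatives, to showing that certain associated multiprojective polynomials have no roots in $K$ — exactly the rootlessness condition that already controls Theorem~\ref{thm:new}.

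For the inequivalence bound I would deploy the automorphism-group machinery invoked throughout this paper. Each such function should again carry a large cyclic subgroup $Z^\sigma \leq \Aut_{\EL}(F)$ of order $2^m-1$, and via a Zsygmondy prime of $2^m-1$ one argues, as in Theorem~\ref{thm:projequiv}, that any EL-equivalence must normalize these cyclic groups. This forces the multiplier data to agree up to inversion and reduces the equivalence problem to the small, explicitly describable action of $G$ on (bi)projective polynomials from Section~\ref{sec:equiv}. The quantitative heart is a Condition~\eqref{eq:condition}-type estimate showing that each class meets $\mathcal{F}_n$ in at most polynomially many functions; combined with $|\mathcal{F}_n|\geq 2^{\omega(n)}$ this delivers the conclusion.

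The decisive obstacle — and the reason this remains a conjecture — is that conditions (i) provably APN, (ii) superpolynomially large, and (iii) amenable to the Zsygmondy argument pull against one another. Enlarging the parameter space tends either to destroy the APN property, because the associated projective polynomials acquire roots, or to collapse many parameters into one equivalence class, because larger and less rigid functions have smaller and less useful automorphism groups, which weakens the Zsygmondy normalization step. The automorphism-group method proves strong inequivalence \emph{precisely because} the functions are rigid, yet rigidity caps the number of parameters; no presently known APN construction escapes this tension. Bridging it — most plausibly by importing a Kantor-style high-dimensional family into the APN setting while retaining enough cyclic EL-automorphisms to run the Zsygmondy argument on a superpolynomially large parameter space — is where the genuine difficulty lies, and is the step I expect to be the main obstruction.
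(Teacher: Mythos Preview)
The paper does not prove this statement: it is explicitly stated as an open conjecture, and the surrounding text only remarks that the best current lower bounds (Kaspers--Zhou) are sublinear in $Q$ and that ``better constructions or non-constructive arguments \dots\ seem to be needed''. There is therefore no proof in the paper against which to compare your attempt.

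You correctly recognise this and present a research programme rather than a proof. Your proposed strategy --- build a Kantor-style high-dimensional family of quadratic APN functions and bound the size of each CCZ-class via the automorphism-group/Zsygmondy machinery of the paper --- is entirely in the spirit of the paper's own hint toward Kantor's semifield work. You are also candid that the decisive step (producing a family that is simultaneously APN, superpolynomially large, and rigid enough for the Zsygmondy normalisation) is not known, and you identify the tension between parameter richness and automorphism rigidity as the main obstruction. That diagnosis is accurate; it is precisely why the statement remains a conjecture. So your write-up is a reasonable outline of a plausible attack, but it is not a proof, and the paper offers none either.
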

Note that the best current bound by Kaspers and Zhou~\cite{kasperszhou} are not even linear in $Q$ and thus quite far away from this bound. Better constructions or non-constructive arguments (for instance, using probabilistic methods) thus seem to be needed to tackle this conjecture.

\bibliographystyle{acm}

\bibliography{references}

\end{document}